\documentclass[11pt]{amsart}
\usepackage[margin=1.15in]{geometry}
\usepackage{amsmath,amssymb,amsthm}
\usepackage{graphicx}
\usepackage{float}
\newtheorem{theorem}{Theorem}[section]
\newtheorem{proposition}[theorem]{Proposition}
\newtheorem{lemma}[theorem]{Lemma}

\theoremstyle{remark}

\theoremstyle{definition}
\newtheorem{example}[theorem]{Example}

\newcommand{\R}{\mathbb R}
\newcommand{\Z}{\mathbb Z}

\newcommand{\one}{\mathbf 1}
\newcommand{\supp}{\operatorname{supp}}

\title{On the Two-Weight Problem for One-Sided Maximal Operators in Higher Dimensions}

\author{Dinghuai Wang}
\address{School of Mathematics and Statistics, Anhui Normal University, Wuhu 241002, China.}
\email{Wangdh1990@126.com}

\vspace{0.5cm}

\begin{document}
\begin{abstract}
For the one-sided Hardy--Littlewood maximal operator $M_d^+$ on
$\mathbb R^d$, the natural two-weight Muckenhoupt condition was shown
by Sawyer in 1986 to characterize the weak $(p,p)$ inequality in
dimension one, and by Forzani, Mart\'in-Reyes and Ombrosi in 2011 in
dimension two. At the endpoint $p=1$ in dimension three, Ombrosi and Nazarov
have recently given negative answers to both the
Fefferman--Stein-type question and the related weighted
weak-type $(1,1)$ question
\cite{OmbrosiNazarov} (personal communication). In this
paper, we prove that the two-weight characterization fails for every
$d\geq 3$ and $p>1$. More precisely, for every
$1<p<\infty$ and every $d\geq 3$, there exist weights $w$ and $v$ such
that
$$
A_{p,d}^+(w,v)<\infty,
\qquad
\|M_d^+\|_{L^p(v)\to L^{p,\infty}(w)}=\infty.
$$
The proof uses a finite two-dimensional Hardy operator whose weak operator norm is bounded below by $c_p(\log N)^{1/p'}$. This operator is embedded
into the three-dimensional lattice maximal operator and then
transferred to the continuous setting. A tensor extension yields the
same failure in every dimension $d>3$.
\vskip 0.2 true cm

\noindent
\textbf{Keywords.} One-sided maximal operator; two-weight inequality; Muckenhoupt condition; Hardy operator; lattice maximal operator

\vskip 0.2 true cm
\noindent
\textbf{2020 Mathematical Subject Classification.}  Primary 42B25; Secondary 42B20, 46E30
\end{abstract}

\maketitle

\section{Introduction}

The Hardy--Littlewood maximal operator is one of the basic objects of harmonic analysis. Notice that the usual Hardy-Littlewood maximal is the two-sided operator
$$
Mf(x)
=
\sup_{h>0}\frac1{2h}\int_{x-h}^{x+h}|f(t)|\,dt.
$$
Its one-sided analogue on the real
line is
$$
M_1^+f(x)
=
\sup_{h>0}\frac1h\int_x^{x+h}|f(t)|\,dt.
$$
Although $M_1^+$ differs from the classical maximal operator only in
the geometry of the averaging intervals, this asymmetry produces a
distinct weighted theory. The one-sided weight classes and the
corresponding weighted inequalities were developed in
\cite{MRNew,MROT,SawyerOneSided}. In particular,
Sawyer~\cite{SawyerOneSided} characterized the two-weight weak-type inequalities. Subsequent work includes Lorentz, Orlicz,
dyadic, and quantitative estimates; see
\cite{LMR,LMRR,MRSharp,Ombrosi, OrtegaLorentz,OrtegaOrlicz}.

The same geometry appears in ergodic theory. For a measure-preserving
flow $\{\tau_t\}_{t\in\mathbb R}$, the associated one-sided ergodic
maximal operator is
$$
M_\tau f(x)
=
\sup_{h>0}\frac1h\int_0^h|f(\tau_t x)|\,dt.
$$
Translation on $\mathbb R$ recovers $M_1^+$, while transference
connects one-sided maximal estimates with maximal inequalities for
ergodic averages. This connection persists in several parameters; in
particular, the two-dimensional theorem of Forzani,
Mart\'in-Reyes and Ombrosi~\cite{FMO} has an ergodic counterpart for
bi-parameter flows.

For $d\ge1$, define
$$
M_d^+f(x)
=
\sup_{h>0}\frac1{h^d}
\int_{x+[0,h]^d}|f(y)|\,dy,
\qquad x\in\mathbb R^d.
$$
Given weights $w,v$ and $1<p<\infty$, the natural problem is to
characterize
$$
M_d^+:L^p(v)\longrightarrow L^{p,\infty}(w).
$$
Write
$$
\sigma=v^{-1/(p-1)}.
$$
For
$$
Q^-=\prod_{j=1}^d[a_j-h,a_j],
\qquad
Q^+=\prod_{j=1}^d[a_j,a_j+h],
$$
set
$$
A_{p,d}^+(w,v)
=
\sup_{a\in\mathbb R^d,\ h>0}
\frac{w(Q^-)\sigma(Q^+)^{p-1}}{h^{dp}}.
$$
It is said that $(w, v)$ satisfies $A_{1,d}^+(w,v)$ if there exists a positive constant $C$ such that
for all $h > 0$, 
$$\frac{1}{h^d}\int_{x+[-h,0]^{d}}w(y)dy\leq Cv(x)$$
for almost every $x$. The weak-type inequality always shows $A_{p,d}^+(w,v)<\infty$.
Thus the issue is sufficiency. The condition $A_{p,d}^+(w,v)$ was shown
by Sawyer in \cite{SawyerOneSided} to characterize the weak $(p,p)$ inequality in
dimension one. In dimension two, Forzani, Mart\'in-Reyes and Ombrosi~\cite{FMO}
proved that for $1\leq p<\infty$,
$$
M_2^+:L^p(v)\longrightarrow L^{p,\infty}(w)
\quad\Longleftrightarrow\quad
A_{p,2}^+(w,v)<\infty.
$$
A central ingredient of their argument is a two-dimensional geometric
covering lemma. They pointed out the difficulty of extending
that geometry to higher dimensions. Related higher-dimensional results
under restricted weak-type or Orlicz hypotheses were obtained in
\cite{Berra,GM}, but they do not resolve the full two-weight
characterization above. Recently, an endpoint obstruction in dimension three has been obtained in \cite{OmbrosiNazarov}. There is also a closely related obstruction at the endpoint $p=1$. Define
$$
M_3^-w(x)
=
\sup_{h>0}\frac1{h^3}
\int_{x+[-h,0]^3}w(y)\,dy.
$$
Two natural questions arise: whether, for every weight $w$,
\begin{equation}\label{eq:ON-endpoint-first}
w\bigl(\{x\in\R^3:M_3^+f(x)>1\}\bigr)
\lesssim
\int_{\R^3}|f|\,M_3^-w,
\end{equation}
and whether, under the additional assumption $M_3^-w\lesssim w$ almost everywhere, one has
\begin{equation}\label{eq:ON-endpoint-second}
w\bigl(\{x\in\R^3:M_3^+f(x)>1\}\bigr)
\lesssim
\int_{\R^3}|f|\,w.
\end{equation}
Ombrosi and Nazarov have recently shown that \emph{both questions have negative answers} \cite{OmbrosiNazarov} (personal communication).

\medskip

In this paper, we prove that the two-weight characterization fails for every
$d\geq 3$ and $p>1$. Our main result can be stated as follows.

\begin{theorem}\label{thm:main}
Let $1<p<\infty$ and $d\ge3$. There exist a nonnegative locally
integrable weight $w$ and a positive locally integrable weight $v$ on
$\mathbb R^d$ such that
$$
A_{p,d}^+(w,v)<\infty
$$
and
$$
\|M_{d}^+\|_{L^p(v)\to L^{p,\infty}(w)}
=
\infty.
$$
\end{theorem}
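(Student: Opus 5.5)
The approach follows the outline in the abstract: build finite counterexamples whose ratio of weak norm to $A_{p,d}^+$ constant blows up, and glue them into a single pair of weights. Throughout, $p'$ is the conjugate exponent.

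The core is a finite two-dimensional Hardy-type operator. On the grid $\{1,\dots,N\}^2$, consider
$$
H_N f(i,j)=\sum_{(k,l)\in S(i,j)} f(k,l),
$$
where $S(i,j)$ is a staircase region determined by a three-dimensional cube. A point $x=(i,j,s)$ in $\Z^3$ whose cube $x+[0,h]^3$ sweeps a diagonal layer sees a two-parameter family of cells. This is exactly what fails in dimension two, where the covering lemma of \cite{FMO} applies. I would choose weights $w_N$, $\sigma_N$ on the grid with
$$
\sup_{Q^-,Q^+}\frac{w_N(Q^-)\sigma_N(Q^+)^{p-1}}{|Q|^p}\le C
$$
uniformly over all discrete cube pairs. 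I would then show that the weak norm of $H_N$ from $L^p(\sigma_N^{1-p})$ to $L^{p,\infty}(w_N)$ is at least $c_p(\log N)^{1/p'}$. The natural test is
$$
f=\sigma_N\one_E
$$
with $\sigma_N$ decaying like $1/(\text{distance})$ along anti-diagonals. The harmonic-type sums then give $\log N$ growth of the averages, while each individual cube pair only sees $O(1)$ mass.

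I expect this lower bound to be the main obstacle. The $\log N$ must survive on a level set of $w_N$-measure comparable to $\|f\|^p$. At the same time, one must verify the $A_p^+$ bound for every cube pair, not just for the dyadic or aligned ones. I would first try weights that are constant on anti-diagonal layers $i+j+s=m$, so that cube pairs reduce to one-dimensional interval pairs in $m$. The condition then reduces to a Sawyer-type one-dimensional check.

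Next comes the embedding and transfer. I would lift the grid construction into $\Z^3$ so that the lattice maximal operator dominates $H_N$ pointwise, with the lifted weights keeping a uniform $A_{p,3}^+$ constant. To pass to $\R^3$, I would replace each lattice point by a unit cube $z+[0,1)^3$ and extend the weights as constants on these cubes. Continuous cubes of side $h\ge1$ are covered by boundedly many lattice cubes, so $A_{p,3}^+$ changes by at most a constant factor. Side lengths $h<1$ are harmless because the weights are locally constant, except near cube faces, which can be handled by slightly mollifying or by comparing with neighbours. The lower bound for $M_3^+$ follows from the lattice one, since $M_3^+f(x)\gtrsim M_{\text{lattice}}f(z)$ for $x$ in the cube of $z$.

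Finally, I would glue the pieces. I would take disjoint, widely separated translates of the $N_k$-constructions, with $N_k=2^{2^k}$, normalized so that the combined $A_p^+$ constant stays bounded. The separation must be chosen so that cross-interactions between different blocks contribute only small terms, and $v$ must be positive everywhere; for instance, one can set $v$ very large and $w=0$ on the gaps. Each block then gives
$$
\|M_3^+\|\ge c(\log N_k)^{1/p'}\to\infty.
$$
For $d>3$, I would tensor with $\one_{[0,L]^{d-3}}$-type weights, taking $w\otimes 1$ and $v\otimes1$ on large cubes in the extra variables. The extra factors cancel in $A_{p,d}^+$ and in the weak inequality, applied to $f\otimes\one$ after restricting to the region where the extra coordinates lie in a bounded box.
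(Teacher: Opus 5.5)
Your outline has the right architecture: a finite two-parameter Hardy operator with norm $\gtrsim(\log N)^{1/p'}$, embedding in $\Z^3$, transfer to $\R^3$, gluing, and tensoring. But there is a genuine gap. The step you flag as the main obstacle is the whole content of the proof, you do not supply it, and the two concrete devices you propose for it point the wrong way.

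First, weights constant on the layers $x_1+x_2+x_3=u$ make the problem one-dimensional. Write $z$ for the transverse variable. Then $x+[0,h]^3$ lies in $\{u(x)\le u\le u(x)+3h,\ |z-z(x)|\le\sqrt3\,h\}$, so $M_3^+f\lesssim M_u^+(M_zf)$, where $M_z$ is the ordinary two-dimensional maximal operator. Fubini and Sawyer's one-dimensional theorem then give the weak type whenever the one-dimensional pair $(W,V)$ satisfies Sawyer's condition. So if the cube condition really reduced to a Sawyer-type check, you would be proving boundedness, not a counterexample.

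Second, $f=\sigma_N\one_E$ is only a restricted-type test, since $\|f\|_{L^p(v)}^p=\sigma_N(E)$. In the construction that actually works, it stays bounded uniformly in $N$, as explained below.

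What is missing are three ingredients.

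(i) Explicit finite weights with an exact Muckenhoupt identity. Take $\mu=\one_{\{i+j=N+1\}}$, so that $A_{ij}=\mu(\{(r,s):r\le i,\ s\le j\})=(i+j-N)_+$. Take $\nu$ to be the mixed second difference of $B_{ij}=b_{i+j-N}$ with $b_m=m^{1-p'}$. This telescopes to $S_{ij}=\sum_{k\ge i,\ell\ge j}\nu_{k\ell}=(i+j-N)^{1-p'}$, hence $A_{ij}S_{ij}^{p-1}=1$ for every quadrant pair. Note that the condition needed on the grid is this quadrant (Hardy) condition, not a condition over squares.

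(ii) A duality lower bound. Combine $\sum H_Ng\,\mu=\sum gA\nu$, the estimate $\sum_{\Lambda_N}\phi\,\mu\le p'\|\phi\|_{\ell^{p,\infty}(\mu)}N^{1/p'}$, and $\sum A^{p'}\nu\simeq N\log N$ to get $C_N\gtrsim(\log N)^{1/p'}$. Since $\sup_t t^{p'}\nu(\{A>t\})\simeq N$, the logarithm is exactly the gap between $\|A\|_{\ell^{p'}(\nu)}$ and $\|A\|_{\ell^{p',\infty}(\nu)}$. The extremal $g=A^{p'-1}$ detects it. Indicators do not: the same distributional bound for $A^F=\mu(F\cap\{i\le k,\ j\le\ell\})$ gives $\|H_N\one_E\|_{\ell^{p,\infty}(\mu)}\lesssim_p\nu(E)^{1/p}$ for all $E$.

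(iii) The geometric device that turns cubes into quadrants. Put $\mu$ on $\{x_3=0\}$ and $(4N)^{3p'}\nu$ on $\{x_3=4N\}$. Any cube pair charging both supports has side larger than $N$, so its horizontal slices are exactly $\{1,\dots,a_1\}\times\{1,\dots,a_2\}$ and $\{a_1,\dots,N\}\times\{a_2,\dots,N\}$. This reduces $A_{p,3}^+$ for all cube pairs to (i), and makes $M^+_{\Z^3}$ dominate $H_N$.

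Two smaller points.

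The claim $M_3^+f(x)\gtrsim M^+_{\Z^3}f(z)$ fails for $x$ near the top corner of the cell of $z$, because cells sharing a coordinate with $z$ are seen only in thin slabs. Restrict to $x\in z+[0,1/2)^3$, or stagger the supports as the paper does with $L_{ij}$ and $U_{ij}$.

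For the gluing, translate the blocks along $(1,-1,0)$. Then no point of one block's $w$-support is $\preceq$ a point of another block's $\sigma$-support, so cross terms vanish rather than merely being small.
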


The proof is carried out first in dimension three. Once the
three-dimensional counterexample is obtained, a tensor extension gives
the result for every $d>3$. Thus the main point is to construct weights
on $\mathbb R^3$ satisfying
$$
A_{p,3}^+(w,v)<\infty,
\qquad
\|M_3^+\|_{L^p(v)\to L^{p,\infty}(w)}=\infty.
$$
Then, we begin with the corresponding discrete problem.

The proof is organized as follows. Section~2 develops the lattice problem and the finite Hardy operator. Section~3 gives the continuous
finite construction. Section~4 shows the global counterexample in
dimension three and proves the extension to all $d>3$.

\section{The lattice problem and the finite Hardy operator}
\subsection{Lattice problem and necessity}
For $n\in\mathbb Z^3$, define the lattice one-sided maximal operator by
\begin{equation*}
M_{\mathbb Z^3}^{+}f(n)
=
\sup_{r\geq1}
\frac{1}{r^3}
\sum_{m\in\{0,\ldots,r-1\}^3}
|f(n+m)|.
\end{equation*}
Discrete Hardy--Littlewood operators on $\mathbb Z^d$ have been studied
extensively, in particular in connection with dimension-free estimates.
Bourgain, Mirek, Stein and Wr\'obel~\cite{BMMSW} considered discrete operators over cubes in $\mathbb Z^d$. They proved
dimension-free maximal estimates for the full family of cube averages
in the range $3/2<p\leq\infty$, and for dyadic scales in the full range
$1<p\leq\infty$. They also obtained variational estimates and related
ergodic consequences.

For $a=(a_1,a_2,a_3)\in\mathbb Z^3$ and $r\geq1$, define
\begin{align*}
Q_{\mathbb Z}^{-}(a,r)
&=
\prod_{j=1}^3
\{a_j-r+1,\ldots,a_j\},
\\
Q_{\mathbb Z}^{+}(a,r)
&=
\prod_{j=1}^3
\{a_j,\ldots,a_j+r-1\}.
\end{align*}
Both cubes contain exactly $r^3$ lattice points and have the common
vertex $a$.

To distinguish discrete weighted sums from their continuous integral counterparts, we define
\begin{equation*}
w[E] := \sum_{n\in E} w(n),
\end{equation*}
while the continuous analogue is denoted by
$$
w(E)=\int_E w(x)\,dx.
$$

Let $w$ and $v$ be weights on $\mathbb Z^3$, with $v>0$, and
$
\sigma=v^{-1/(p-1)}.
$
The associated discrete one-sided Muckenhoupt constant is
\begin{equation*}
A_{p,\mathbb Z^3}^{+}(w,v)
=
\sup_{\substack{a\in\mathbb Z^3\\ r\geq1}}
\frac{
w[Q_{\mathbb Z}^{-}(a,r)]
\,
\sigma[Q_{\mathbb Z}^{+}(a,r)]^{p-1}
}{
r^{3p}
}.
\end{equation*}
We use the weighted weak norm
$$
\|g\|_{\ell^{p,\infty}(w)}
=
\sup_{\lambda>0}
\lambda\,
w\bigl[
\{n\in\mathbb Z^3:|g(n)|>\lambda\}
\bigr]^{1/p}.
$$

We write
\begin{equation*}
x\preceq y
\quad\Longleftrightarrow\quad
x_j\leq y_j,
\qquad j=1,2,3.
\end{equation*}
For every pair $Q^{-}_{\mathbb Z}:=Q_{\mathbb Z}^{-}(a,r)$ and
$Q^{+}_{\mathbb Z}:=Q_{\mathbb Z}^{+}(a,r)$,
\begin{equation*}
x\in Q^{-}_{\mathbb Z},
\qquad
y\in Q^{+}_{\mathbb Z}
\quad\Longrightarrow\quad
x\preceq y. \qquad (\text{see~Figure}~\ref{fig-1})
\end{equation*}

\begin{figure}[H]
\centering
\includegraphics[width=.48\textwidth]{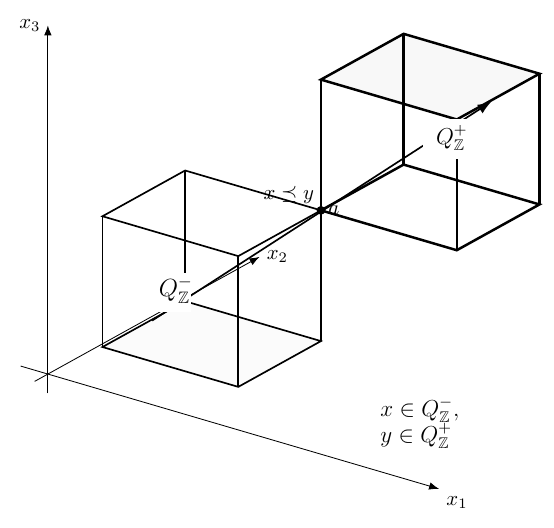}
\caption{$x\in Q^-_{\Z}$ and $y\in Q^+_{\Z}$.}
\label{fig-1}
\end{figure}

The discrete analogue of the necessity statement is as follows.

\begin{theorem}\label{thm:lattice}
Let $1<p<\infty$. If
\begin{equation}\label{eq:lattice-weak}
\|M^+_{\Z^3}f\|_{\ell^{p,\infty}(w)}
\le C\|f\|_{\ell^p(v)},
\end{equation}
then $A_{p,\Z^3}^+(w,v)<\infty$. However, there exist $w\ge0$ and $v>0$ on $\Z^3$ such that
$$
A_{p,\Z^3}^+(w,v)<\infty,
\qquad
\|M^+_{\Z^3}\|_{\ell^p(v)\to\ell^{p,\infty}(w)}=\infty.
$$
\end{theorem}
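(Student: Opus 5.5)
The statement has two parts, and I would treat them separately: the necessity of the lattice $A_p$ condition, and the construction of weights for which the converse fails.

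\emph{Necessity.} Fix $a\in\Z^3$ and $r\ge1$. First assume $\sigma[Q_{\Z}^{+}(a,r)]<\infty$; otherwise replace $\sigma$ by $\min(\sigma,k)$ and let $k\to\infty$. Test \eqref{eq:lattice-weak} on $f=\sigma\one_{Q_{\Z}^{+}(a,r)}$, so that $\|f\|_{\ell^p(v)}^p=\sigma[Q_{\Z}^{+}(a,r)]$. Take $x\in Q_{\Z}^{-}(a,r)$. The discrete cube $x+\{0,\dots,2r-2\}^3$ contains $Q_{\Z}^{+}(a,r)$, because every point $y$ of $Q^+_{\Z}$ satisfies $x\preceq y$ and $y_j-x_j\le 2r-2$. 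Hence
\[
M^+_{\Z^3}f(x)\ge \frac{\sigma[Q_{\Z}^{+}(a,r)]}{(2r)^3}.
\]
Take $\lambda$ just below this value in the weak-type inequality. This gives
\[
w[Q_{\Z}^{-}(a,r)]\,\sigma[Q_{\Z}^{+}(a,r)]^{p}(8r^3)^{-p}\le C^p\,\sigma[Q_{\Z}^{+}(a,r)],
\]
so $A_{p,\Z^3}^+(w,v)\le 8^pC^p$.

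\emph{Counterexample.} The plan is a disjoint union of finite blocks. For each $N$, block $B_N$ is built from a finite two-dimensional Hardy operator
\[
H_Nf(i,j)=\sum_{i'\ge i,\ j'\ge j} K(i,j;i',j')\,f(i',j'),\qquad 1\le i,j\le N.
\]
Here the kernel comes from the averaging factor $r^{-3}$, and the two indices are realized as two independent directions in $\Z^3$. Concretely, I would put the $w$-mass at points $x_{ij}$ and the $\sigma$-mass at points $y_{i'j'}$, chosen on a plane transverse to $(1,1,1)$ with geometrically growing scales. I want the relation $x_{ij}\preceq y_{i'j'}$ to hold exactly when $i\le i'$ and $j\le j'$, and the smallest cube joining them to have side comparable to a prescribed scale. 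The third coordinate is the extra freedom that three dimensions give over two: it lets a cube with vertex $a$ see the two-parameter order while keeping the $w$- and $\sigma$-masses of one pair $Q^-,Q^+$ coupled only through a single "corner". This is the mechanism absent in the covering lemma of \cite{FMO}.

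Then I would prove two estimates.
\begin{enumerate}
\item[(i)] $A^+_{p,\Z^3}(w_N,v_N)\le C_p$ uniformly in $N$. The point is that for any cube pair, $w[Q^-]$ and $\sigma[Q^+]$ reduce to one-parameter tail sums, where a one-dimensional Sawyer-type bound is available.
\item[(ii)] $\|M^+_{\Z^3}\|_{\ell^p(v_N)\to\ell^{p,\infty}(w_N)}\ge c\,\|H_N\|_{\text{weak}}\ge c_p(\log N)^{1/p'}$. Here the first inequality comes from $M^+_{\Z^3}f(x_{ij})\ge r^{-3}\sum_{y\in x_{ij}+\{0,\dots,r-1\}^3}f(y)$, and the second is the logarithmic lower bound for the two-dimensional Hardy operator: test on $f=\sigma$ over the whole block and observe a divergent harmonic-type double sum at level sets.
\end{enumerate}
Finally I would translate the blocks $B_N$ far apart, for instance along $(1,1,1)$ at super-exponentially spaced positions. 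Any cube pair meeting two blocks then has $r$ so large that $r^{-3p}$ kills the cross terms, and the local estimates from (i) give global $A_p<\infty$, while (ii) gives infinite norm.

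The main obstacle is (i). One has to check the $A_p$ quotient for \emph{every} $a$ and $r$, including cubes that capture many $x_{ij}$ and many $y_{i'j'}$ at different scales. My first attempt would be to choose the masses $w(x_{ij})$ and $\sigma(y_{i'j'})$ as products $\alpha_i\beta_j$ with geometric scale parameters. Then the extremal cubes are those with vertex at a lattice corner of the configuration, and the quotient factorizes into two one-dimensional sums, each bounded by a geometric series.
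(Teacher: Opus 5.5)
Your necessity argument is correct and matches the paper's: test $f=\sigma\one_{Q^+_{\Z}}$, use $Q^+_{\Z}\subset x+\{0,\ldots,2r-2\}^3$ for $x\in Q^-_{\Z}$, and conclude $A^+_{p,\Z^3}(w,v)\le 8^pC^p$.

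The counterexample part has a genuine gap. You never specify the finite weights, and the ansatz you do propose cannot produce any growth. Suppose the masses are products, $\mu_{ij}=\alpha_i\beta_j$ and $\nu_{k\ell}=\gamma_k\delta_\ell$. Then the two-parameter Hardy operator $g\mapsto\sum_{k\ge i,\ell\ge j}g_{k\ell}\nu_{k\ell}$ is the tensor product of two one-dimensional tail-sum operators. Its $\ell^p(\nu)\to\ell^p(\mu)$ norm is at most the product of their norms. By Muckenhoupt's one-dimensional Hardy inequality, each of these is controlled by the corresponding factor of $\sup_{i,j}A_{ij}S_{ij}^{p-1}$, which factorizes for product weights. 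So the structure that lets your step (i) reduce to one-parameter Sawyer/Muckenhoupt bounds also makes the operator in step (ii) bounded uniformly in $N$.

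The obstruction must therefore be genuinely two-parameter: the quadrant condition has to hold while the Hardy operator is unbounded. The paper achieves this with $\mu_{ij}=\one_{\{i+j=N+1\}}$ and $\nu_{ij}=B_{ij}-B_{i+1,j}-B_{i,j+1}+B_{i+1,j+1}$, where $B_{ij}=b_{i+j-N}$ and $b_m=m^{1-p'}$. Then $A_{ij}=(i+j-N)_+$ and $S_{ij}=(i+j-N)^{1-p'}$, so $A_{ij}S_{ij}^{p-1}\equiv1$, while $\sum A_{ij}^{p'}\nu_{ij}\simeq_p N\log N$. The lower bound comes from duality. Since $\mu(\Lambda_N)=N$, one has $\sum_{\Lambda_N}u\,d\mu\le p'\|u\|_{\ell^{p,\infty}(\mu)}N^{1/p'}$. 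This yields $(\sum A^{p'}\nu)^{1/p'}\le p'C_NN^{1/p'}$, hence $C_N\ge c_p(\log N)^{1/p'}$.

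Your suggested test $f=\sigma\one_{B_N}$, i.e.\ $g\equiv1$, does not detect this. For these weights $H_N1(i,j)=S_{ij}=1$ on $\supp\mu$ and $\sum\nu\simeq_pN$, so the ratio stays bounded. The bad direction is $g=A^{p'-1}$.

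The geometric embedding is also left open. If your plane is orthogonal to $(1,1,1)$, the construction fails outright, since $x\preceq y$ on such a plane forces $x=y$. No geometric scales or scale-dependent kernel are needed. The paper places $\omega_N(i,j,k)=\mu_{ij}\one_{\{k=0\}}$ and $\tau_N(i,j,k)=(4N)^{3p'}\nu_{ij}\one_{\{k=4N\}}$ on two horizontal planes. The vertical gap forces $r\ge2N+1$ for any pair $Q^-_{\Z},Q^+_{\Z}$ that sees both planes. The slices are then exactly the quadrants $\{1,\ldots,a_1\}\times\{1,\ldots,a_2\}$ and $\{a_1,\ldots,N\}\times\{a_2,\ldots,N\}$. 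Consequently the $A_p$ quotient equals $(4N)^{3p}A_{a_1a_2}S_{a_1a_2}^{p-1}\le2^{3p}r^{3p}$. A single cube of side $4N+1$ gives $M^+_{\Z^3}G_N(i,j,0)\ge(4N)^{3p'}(4N+1)^{-3}H_Ng_N(i,j)$.

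For the gluing, the paper translates by $(T_k,-T_k,0)$, which makes different blocks mutually $\preceq$-incomparable, so cross terms never occur. Your translation along $(1,1,1)$ creates comparabilities that would have to be absorbed by the spacing; that is workable but not the main issue.
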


\begin{proof}[Proof of necessity]
Fix $a\in\Z^3$ and $r\ge1$, assume first that $\sigma[Q^+_{\Z}]<\infty$ and take $f=\sigma\one_{Q^+}$. If $n\in Q^-_{\Z}$ and $m\in Q^+_{\Z}$, then
$$
0\le m_j-n_j\le 2r-2.
$$
Hence
$$
Q^+_{\Z}\subset n+\{0,\ldots,2r-2\}^3,
$$
and
$$
M^+_{\Z^3}f(n)
\ge
\frac{\sigma[Q^+_{\Z}]}{(2r-1)^3},
\qquad n\in Q^-_{\Z}.
$$
The inequality \eqref{eq:lattice-weak} gives
$$
\frac{\sigma[Q^+_{\Z}]}{(2r-1)^3}w[Q^-_{\Z}]^{1/p}
\le
C\left(\sum_{Q^+_{\Z}}\sigma^pv\right)^{1/p}.
$$
Since $\sigma^pv=\sigma$,
$$
w[Q^-_{\Z}]\frac{\sigma[Q^+_{\Z}]^p}{(2r-1)^{3p}}
\le C^p\sigma[Q^+_{\Z}].
$$
Therefore,
$$
w[Q^-_{\Z}]\sigma[Q^+_{\Z}]^{p-1}
\le C^p(2r-1)^{3p}
\le 2^{3p}C^pr^{3p}.
$$
If $\sigma[Q^+_{\Z}]=\infty$, replace $\sigma$ by $\min\{\sigma,k\}$ and let $k\to\infty$. Thus $A_{p,\Z^3}^+(w,v)\le2^{3p}C^p$.
\end{proof}

To disprove the converse, we first construct a finite two-parameter Hardy operator.

\subsection{Finite two-parameter Hardy operator}

Two-dimensional Hardy inequalities already exhibit genuinely
multiparameter two-weight phenomena. Sawyer~\cite{Sawyer} obtained
a characterization for the continuous two-dimensional Hardy operator,
while Rakotondratsimba~\cite{Rakotondratsimba} treated the discrete
two-dimensional operator. More recent work has considered
multi-dimensional discrete Hardy inequalities; see
\cite{StepanovUshakova25}. The finite two-parameter Hardy operator below is not an application
of those characterizations. It is needed for the maximal-function construction.

Fix $N\ge8$ and let
$$
\Lambda_N=\{1,\ldots,N\}^2.
$$
Define $\mu$ on $\Lambda_N$ by
\begin{equation*}
\mu_{ij}=\one_{\{i+j=N+1\}}.
\end{equation*}
Thus $\supp\mu\subset\{(i,j):i+j=N+1\}$. For $(i,j)\in\Lambda_N$, set
\begin{equation*}
A_{ij}
=
\sum_{r\le i}\sum_{s\le j}\mu_{rs}.
\end{equation*}
The intersection of $\{(r,s):r\le i,\ s\le j\}$ with $\supp\mu$ has cardinality $(i+j-N)_+$, see Figure \ref{fig-2}.
\begin{figure}[H]
\centering
\includegraphics[width=.42\textwidth]{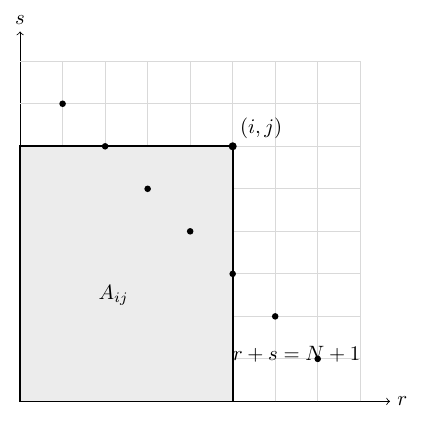}
\caption{The index set $\{(r,s):r\le i,\ s\le j\}$.}
\label{fig-2}
\end{figure}
A direct computation gives
\begin{equation*}
A_{ij}=(i+j-N)_+.
\end{equation*}
Set $\nu_{ij}=0$ when $i+j\le N$. If $m=i+j-N\ge1$, define $b_m=m^{1-p'}, p'=\frac{p}{p-1}$ and
\begin{align*}
\nu_{ij}&=b_m-2b_{m+1}+b_{m+2}, && i<N,\ j<N,\\
\nu_{Nj}&=b_j-b_{j+1}, && j<N,\\
\nu_{iN}&=b_i-b_{i+1}, && i<N,\\
\nu_{NN}&=b_N.
\end{align*}
Since $t\mapsto t^{1-p'}$ is decreasing and convex on $(0,\infty)$, all the numbers are nonnegative.
For $i+j\ge N+1$, let
\begin{equation*}
S_{ij}=\sum_{k=i}^N\sum_{\ell=j}^N\nu_{k\ell}.
\end{equation*}
The two index sets defining $A_{ij}$ and $S_{ij}$ are shown in Figure~\ref{fig-3}.

\begin{figure}[H]
\centering
\includegraphics[width=.42\textwidth]{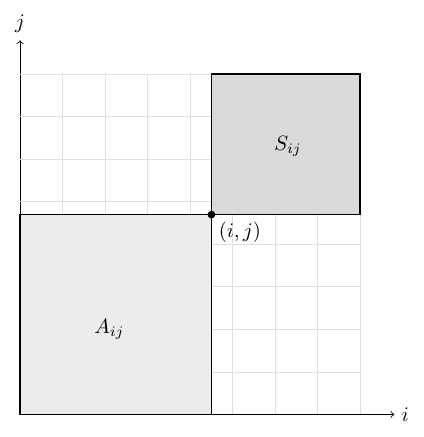}
\caption{The index sets $\{r\le i,\ s\le j\}$ and $\{k\ge i,\ \ell\ge j\}$ defining $A_{ij}$ and $S_{ij}$.}
\label{fig-3}

\end{figure}

\begin{example}\label{ex:N6}
Let $N=6$ and $(i,j)=(4,4)$. Then
$$
m=i+j-N=4+4-6=2,
$$
and $A_{44}=m=2$. On the other hand, the relevant $3\times 3$ block of $\nu_{k\ell}$ for $4\le k,\ell\le 6$ is
$$
\begin{array}{c|ccc}
 & \ell=4 & \ell=5 & \ell=6\\ \hline
k=4 & b_2-2b_3+b_4 & b_3-2b_4+b_5 & b_4-b_5\\
k=5 & b_3-2b_4+b_5 & b_4-2b_5+b_6 & b_5-b_6\\
k=6 & b_4-b_5 & b_5-b_6 & b_6
\end{array}
$$
Summing all terms gives
$$
\begin{aligned}
S_{44}
&=
(b_2-2b_3+b_4)
+(b_3-2b_4+b_5)
+(b_4-b_5)\\
&\quad
+(b_3-2b_4+b_5)
+(b_4-2b_5+b_6)
+(b_5-b_6)\\
&\quad
+(b_4-b_5)
+(b_5-b_6)
+b_6.
\end{aligned}
$$
It follows that
$$
S_{44}=b_2=2^{1-p'}=m^{1-p'} \quad \text{and} \quad A_{44}S_{44}^{p-1}=1.
$$
\end{example}

The same cancellation holds for every $N$ and every index with $i+j\ge N+1$.

\begin{proposition}\label{prop:finite}
Let $1<p<\infty$. For $i+j\ge N+1$,
\begin{equation*}
S_{ij}=(i+j-N)^{1-p'} \quad \text{and} \quad
A_{ij}S_{ij}^{p-1}=1.
\end{equation*}
Moreover,
\begin{equation}\label{eq:NlogN}
c_pN\log N
\le
\sum_{i=1}^N\sum_{j=1}^N A_{ij}^{p'}\nu_{ij}
\le
C_pN\log N.
\end{equation}
\end{proposition}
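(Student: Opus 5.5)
The identity for $S_{ij}$ is a telescoping computation. I would extend the sequence by $b_{m}$ for $m\ge 1$ and observe that the definition of $\nu$ on the region $\{i+j\ge N+1\}$ is a mixed second difference of the one-variable function $F(k,\ell)=b_{k+\ell-N}$, modified on the boundary rows $k=N$ and $\ell=N$ so that $F$ is treated as vanishing beyond $N$. Precisely, set $F(k,\ell)=b_{k+\ell-N}$ for $1\le k,\ell\le N$ with $k+\ell\ge N+1$, and $F(k,\ell)=0$ if $k=N+1$ or $\ell=N+1$. Then in every case
$$
\nu_{k\ell}=F(k,\ell)-F(k+1,\ell)-F(k,\ell+1)+F(k+1,\ell+1).
$$
For $k,\ell<N$ this is $b_m-2b_{m+1}+b_{m+2}$. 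For $k=N$ it is $b_{m}-b_{m+1}$ with $m=\ell$. For $k=\ell=N$ it is $b_N$. Summing over $k\ge i$ and $\ell\ge j$ telescopes in both variables, so $S_{ij}=F(i,j)=(i+j-N)^{1-p'}$. Since $A_{ij}=i+j-N$ on this region and $(1-p')(p-1)=-1$, we get $A_{ij}S_{ij}^{p-1}=1$. Nonnegativity was already noted in the statement via convexity.

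For \eqref{eq:NlogN}, only indices with $m=i+j-N\ge1$ contribute, and $A_{ij}^{p'}=m^{p'}$. I would split the sum into interior terms ($i,j<N$) and boundary terms.

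\textbf{Interior terms.} For fixed $m\in\{1,\dots,N-2\}$, the number of pairs with $i+j=N+m$ and $i,j<N$ is $N-m-1$. The second difference satisfies $b_m-2b_{m+1}+b_{m+2}\asymp_p m^{-1-p'}$. This holds by the mean value theorem, since $(t^{1-p'})''=(p'-1)p'\,t^{-1-p'}$ evaluated at a point of $[m,m+2]\subset[m,3m]$. The interior contribution is therefore comparable to
$$
\sum_{m=1}^{N-2}(N-m-1)\,m^{p'}m^{-1-p'}=\sum_{m=1}^{N-2}\frac{N-m-1}{m}.
$$
This is between $c N\log N$ (restrict to $m\le N/2$, using $N\ge8$) and $N\log N$.

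\textbf{Boundary terms.} These are $2\sum_{j<N}j^{p'}(b_j-b_{j+1})+N^{p'}b_N$. Since $b_j-b_{j+1}\asymp j^{-p'}$, this is $O_p(N)$, which is absorbed into the upper bound. It is nonnegative, so it does not affect the lower bound.

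The only step needing care is the uniform two-sided comparability $b_m-2b_{m+1}+b_{m+2}\asymp m^{-1-p'}$ down to $m=1$, with constants depending only on $p$. The mean value theorem handles this directly, since the interval $[m,m+2]$ stays within a factor $3$ of $m$. Everything else is bookkeeping.
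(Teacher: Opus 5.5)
Your proposal is correct and follows essentially the same route as the paper: you write $\nu$ as a mixed second difference of $b_{k+\ell-N}$ with zero values at index $N+1$, and telescoping then gives $S_{ij}=b_{i+j-N}$. You then count diagonal points ($N-m-1$ interior, two boundary, plus $(N,N)$) and use mean-value estimates for the first and second differences of $t^{1-p'}$ to get the two-sided $N\log N$ bound. Restricting to $m\le N/2$ rather than the paper's $4\le m\le N/2$ for the lower bound is an inessential variation.
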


\begin{proof}
Let
$$
U_N=\{(i,j)\in\Lambda_N:i+j\ge N+1\}
$$
and set
$$
B_{ij}=b_{i+j-N},\qquad (i,j)\in U_N.
$$
and $B_{N+1,j}=B_{i,N+1}=0$. Then the definition of $\nu$ can be written as
\begin{equation*}
\nu_{ij}=B_{ij}-B_{i+1,j}-B_{i,j+1}+B_{i+1,j+1},
\qquad (i,j)\in U_N.
\end{equation*}
We first sum in the second coordinate:
\begin{align*}
\sum_{\ell=j}^N\nu_{k\ell}
&=
\sum_{\ell=j}^N
\bigl(B_{k\ell}-B_{k+1,\ell}-B_{k,\ell+1}+B_{k+1,\ell+1}\bigr)\\
&=B_{kj}-B_{k+1,j}.
\end{align*}
Summing now in $k$, we obtain
$$
S_{ij}
=
\sum_{k=i}^N(B_{kj}-B_{k+1,j})
=B_{ij}
=b_{i+j-N}=(i+j-N)^{1-p'}.
$$
For $m=i+j-N\ge1$, we have $A_{ij}=m$ and $S_{ij}=m^{1-p'}$. Since
$$
(1-p')(p-1)=-1,
$$
we obtain
$$
A_{ij}S_{ij}^{p-1}
=m\,m^{-1}=1.
$$

For $m\geq1$, the mean value theorem gives a point
$\xi_m\in(m,m+1)$ such that
\begin{equation*}
b_m-b_{m+1}
=
(p'-1)\xi_m^{-p'}
\simeq_p
m^{-p'}.
\end{equation*}
Similarly,
\begin{equation*}
b_m-2b_{m+1}+b_{m+2}
\simeq_p
m^{-p'-1}.
\end{equation*}
On the diagonal $i+j=N+m$ we have $A_{ij}=m$. For $1\le m\le N-2$ there are $N-m-1$ interior points, and there are two boundary points.
At the point $(N,N)$, $A_{NN}=N$. Therefore
\begin{equation*}
\sum_{i,j}A_{ij}^{p'}\nu_{ij} = \sum_{m=1}^{N-2}(N-m-1)m^{p'}(b_m-2b_{m+1}+b_{m+2}) +2\sum_{m=1}^{N-1}m^{p'}(b_m-b_{m+1}) +N^{p'}b_N.
\end{equation*}
By the two estimates above,
$$
m^{p'}
(b_m-b_{m+1})
\lesssim_p1,
$$
and
$$
m^{p'}
(b_m-2b_{m+1}+b_{m+2})
\lesssim_p
\frac1m.
$$
Therefore,
\begin{align*}
\sum_{i,j}A_{ij}^{p'}\nu_{ij}
&\lesssim_p
\sum_{m=1}^{N-2}
(N-m-1)\frac1m
+N+N\\
&\lesssim_p
N\sum_{m=1}^{N}\frac1m \lesssim_p
N\log N .
\end{align*}

For the reverse inequality, it is enough to consider
$$
4\le m\le \frac N2 .
$$
Since $N\ge8$,
$
N-m-1\ge \frac N4 .
$
Using the second-difference estimate,
$$
m^{p'}
(b_m-2b_{m+1}+b_{m+2})
\ge
\frac{c_p}{m}.
$$
Consequently,
\begin{align*}
\sum_{i,j}A_{ij}^{p'}\nu_{ij}
&\ge
c_pN
\sum_{m=4}^{\lfloor N/2\rfloor}
\frac1m \ge
c_pN\log N .
\end{align*}
The upper and lower bounds together prove \eqref{eq:NlogN}.
\end{proof}

Define the finite two-dimensional Hardy operator
\begin{equation*}
H_Ng(i,j)=\sum_{k=i}^N\sum_{\ell=j}^N g_{k\ell}\nu_{k\ell}
\end{equation*}
and let
$$
C_N=\|H_N\|_{\ell^p(\nu)\to\ell^{p,\infty}(\mu)}.
$$

\begin{lemma}\label{lem:weak-integration}
If $u\ge0$ on $\Lambda_N$ and $\mu[\Lambda_N]=N$, then
\begin{equation*}
\sum_{\Lambda_N}u\,d\mu
\le
p'\|u\|_{\ell^{p,\infty}(\mu)}N^{1/p'}.
\end{equation*}
\end{lemma}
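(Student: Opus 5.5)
The plan is the standard layer-cake argument combined with the weak-type bound on level sets. Write $K=\|u\|_{\ell^{p,\infty}(\mu)}$; if $K=\infty$ there is nothing to prove, so assume $K<\infty$. By definition of the weak norm, for every $\lambda>0$
$$
\mu\bigl[\{u>\lambda\}\bigr]\le \min\bigl\{N,\ K^p\lambda^{-p}\bigr\},
$$
where the bound by $N$ comes from $\mu[\Lambda_N]=N$. Since $\Lambda_N$ is finite, the layer-cake formula
$$
\sum_{\Lambda_N}u\,d\mu=\int_0^\infty \mu\bigl[\{u>\lambda\}\bigr]\,d\lambda
$$
holds trivially, and I will insert the above bound.

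Next, split the $\lambda$-integral at the level $\lambda_0=KN^{-1/p}$, where the two bounds coincide. On $(0,\lambda_0)$ use the trivial bound $N$, giving a contribution $N\lambda_0=KN^{1-1/p}=KN^{1/p'}$. On $(\lambda_0,\infty)$ use $K^p\lambda^{-p}$, whose integral equals
$$
\frac{K^p\lambda_0^{1-p}}{p-1}=\frac{K N^{(p-1)/p}}{p-1}=(p'-1)KN^{1/p'},
$$
using $p>1$ so that the tail converges.

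Adding the two pieces gives $(1+p'-1)KN^{1/p'}=p'KN^{1/p'}$, which is exactly the claimed constant. There is no real obstacle. The only points to check are the choice of splitting level, so that the constant comes out as $p'$ rather than something larger, and the identity $1-1/p=1/p'$. Nonnegativity of $u$ is what justifies the layer-cake formula.
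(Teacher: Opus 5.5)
Your proposal is correct and follows the paper's proof essentially line for line. Both use the layer-cake formula, bound $\mu[\{u>\lambda\}]$ by $\min\{N,K^p\lambda^{-p}\}$, split at $\lambda_0=KN^{-1/p}$, and get the contributions $KN^{1/p'}$ and $(p'-1)KN^{1/p'}$.
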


\begin{proof}
By the distribution formula,
$$
\sum_{\Lambda_N}u\,d\mu
=
\int_0^\infty\mu[\{u>t\}]\,dt.
$$
Then
$$
\mu[\{u>t\}]\le \min\{N,\|u\|_{\ell^{p,\infty}(\mu)}^pt^{-p}\}.
$$
Choose $t_0=\|u\|_{\ell^{p,\infty}(\mu)}N^{-1/p}$. Splitting at $t_0$ yields
\begin{align*}
\int_0^\infty\mu[\{u>t\}]\,dt
&\le Nt_0+\|u\|_{\ell^{p,\infty}(\mu)}^p\int_{t_0}^\infty t^{-p}\,dt\\
&=\|u\|_{\ell^{p,\infty}(\mu)}N^{1/p'}+\frac1{p-1}\|u\|_{\ell^{p,\infty}(\mu)}N^{1/p'}\\
&=p'\|u\|_{\ell^{p,\infty}(\mu)}N^{1/p'}.
\end{align*}
\end{proof}

By Fubini, we have
\begin{align*}
\sum_{i,j}H_Ng(i,j)\mu_{ij}
&=
\sum_{i,j}\sum_{k\ge i,\ell\ge j}g_{k\ell}\nu_{k\ell}\mu_{ij}\\
&=
\sum_{k,\ell}g_{k\ell}\nu_{k\ell}
\sum_{i\le k,j\le\ell}\mu_{ij} =
\sum_{k,\ell}g_{k\ell}A_{k\ell}\nu_{k\ell}.
\end{align*}
Combining Lemma~\ref{lem:weak-integration} with the definition of $C_N$ gives
$$
\sum_{k,\ell}g_{k\ell}A_{k\ell}\nu_{k\ell}
\le
p'C_NN^{1/p'}\|g\|_{\ell^p(\nu)},
$$
since $\mu(\Lambda_N)=\sum_{i=1}^{N}\sum_{j=1}^{N}\mu_{ij}=A_{NN}=N$. Taking the dual supremum over $g$ yields
$$
\left(\sum_{k,\ell}A_{k\ell}^{p'}\nu_{k\ell}\right)^{1/p'}
\le
p'C_NN^{1/p'}.
$$
Proposition~\ref{prop:finite} implies
\begin{equation}\label{eq:CN-lower}
C_N\ge c_p(\log N)^{1/p'}.
\end{equation}
Choose $g_N\ge0$ so that
\begin{equation}\label{eq:gN}
\|H_Ng_N\|_{\ell^{p,\infty}(\mu)}
\ge
\frac12C_N\|g_N\|_{\ell^p(\nu)}.
\end{equation}

\subsection{The three-dimensional lattice counterexample}
The finite two-dimensional Hardy operator contains the obstruction but not yet the
one-sided maximal geometry. We place the two finite measures on parallel
horizontal planes in $\Z^3$ and use one-sided cubes to connect them.
Define
\begin{equation*}
\omega_N(i,j,k)=\mu_{ij}\one_{\{k=0\}},
\qquad
\tau_N(i,j,k)=(4N)^{3p'}\nu_{ij}\one_{\{k=4N\}}.
\end{equation*}
Hence $\supp\omega_N\subset\{k=0\}$ and $\supp\tau_N\subset\{k=4N\}$; see Figure~\ref{fig-4}.

\begin{figure}[H]
\centering
\includegraphics[width=.50\textwidth]{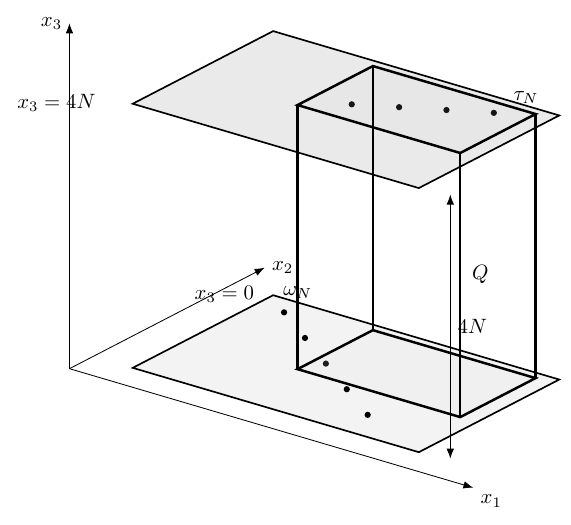}
\caption{The supports of $\omega_N$ and $\tau_N$ on the planes $k=0$ and $k=4N$.}
\label{fig-4}

\end{figure}

\begin{lemma}\label{lem:cube-slice}
If
$$
\omega_N[Q^-_{\Z}]>0,
\qquad
\tau_N[Q^+_{\Z}]>0,
$$
then
\begin{equation*}
4N\le2(r-1),
\qquad\text{and } \quad r>N.
\end{equation*}
Moreover, $1\le a_1,a_2\le N$ and
\begin{equation*}
\begin{aligned}
Q^-_{\Z}\cap(\Lambda_N\times\{0\})
&=
\{1,\ldots,a_1\}\times\{1,\ldots,a_2\}\times\{0\},
\\
Q^+_{\Z}\cap(\Lambda_N\times\{4N\})
&=
\{a_1,\ldots,N\}\times\{a_2,\ldots,N\}\times\{4N\}.
\end{aligned}
\end{equation*}
\end{lemma}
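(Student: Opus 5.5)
The plan is to read off everything from the coordinates of the two cubes. Write $Q^-_{\Z}=Q^-_{\Z}(a,r)$ and $Q^+_{\Z}=Q^+_{\Z}(a,r)$. By construction $\supp\omega_N\subset\Lambda_N\times\{0\}$, and more precisely it lies on the antidiagonal $\{(i,j,0):i+j=N+1\}$. Likewise $\supp\tau_N\subset\{(i,j,4N):(i,j)\in\Lambda_N,\ i+j\ge N+1\}$, since $\nu$ vanishes when $i+j\le N$.

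\emph{Third coordinate.} Positivity of $\omega_N[Q^-_{\Z}]$ forces $0\in\{a_3-r+1,\ldots,a_3\}$. Positivity of $\tau_N[Q^+_{\Z}]$ forces $4N\in\{a_3,\ldots,a_3+r-1\}$. Hence $a_3\ge0$ and $a_3\le r-1$, while $4N\le a_3+r-1\le 2(r-1)$. This gives $4N\le 2(r-1)$, and in particular $r\ge 2N+1>N$.

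\emph{Horizontal coordinates.} Pick a point $(i,j,0)\in Q^-_{\Z}$ with $\mu_{ij}>0$; then $i\le a_1$ and $j\le a_2$. Pick a point $(k,\ell,4N)\in Q^+_{\Z}$ with $\nu_{k\ell}>0$; then $a_1\le k\le N$ and $a_2\le\ell\le N$. Together with $i,j\ge1$ this gives $1\le a_1,a_2\le N$.

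\emph{Slices.} The slice $Q^-_{\Z}\cap(\Lambda_N\times\{0\})$ equals $\{\max(1,a_1-r+1),\ldots,\min(N,a_1)\}\times\{\max(1,a_2-r+1),\ldots,\min(N,a_2)\}\times\{0\}$, using $0\in\{a_3-r+1,\ldots,a_3\}$. Since $r>N\ge a_1$, we have $a_1-r+1\le 0$, so the lower endpoint is $1$. Since $a_1\le N$, the upper endpoint is $a_1$. The second coordinate is handled the same way. Symmetrically, the top slice uses $4N\in\{a_3,\ldots,a_3+r-1\}$, and it has endpoints $a_1$ and $\min(N,a_1+r-1)=N$ because $a_1+r-1\ge r>N$; the second coordinate is again the same.

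There is no real obstacle here. The only care needed is to take the lower bound on $r$ from the vertical separation first, and only then use $r>N$ to strip the horizontal truncations.
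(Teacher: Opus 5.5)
Your proof is correct and follows essentially the same route as the paper: the vertical coordinate gives $a_3\le r-1$ and $4N\le a_3+r-1$, hence $4N\le 2(r-1)$ and $r>N$, and this bound on $r$ is then used to remove the horizontal truncations of both slices. You are slightly more explicit than the paper, which justifies $1\le a_1,a_2\le N$ only by ``the nonempty horizontal intersections'' and dismisses the top slice with ``similarly''. You instead obtain $1\le a_1,a_2\le N$ from $1\le i\le a_1\le k\le N$, and you check directly that the top slice has upper endpoint $\min(N,a_1+r-1)=N$.
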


\begin{proof}
By $
\omega_N[Q^-_{\Z}]>0,
\tau_N[Q^+_{\Z}]>0
$, we have
$$
a_3-r+1\le0\le a_3,
\qquad
a_3\le4N\le a_3+r-1.
$$
The first inequality gives $a_3\le r-1$ and the second gives $4N-a_3\le r-1$. Thus $4N\le2(r-1)$ and $r>N$.

The nonempty horizontal intersections imply $1\le a_1,a_2\le N$. Since $r>N$,
$$
a_1-r+1\le a_1-N\le0,
$$
one has
$$\{a_1-r+1,\ldots,a_1\}\cap \{1,\ldots,N\}=\{1,\ldots,a_1\}.$$
The same argument shows that
$$\{a_2-r+1,\ldots,a_2\}\cap \{1,\ldots,N\}=\{1,\ldots,a_2\}.$$
Hence,
\begin{equation*}
\begin{aligned}
Q^-_{\Z}\cap(\Lambda_N\times\{0\})
&=
\{1,\ldots,a_1\}\times\{1,\ldots,a_2\}\times\{0\}.
\end{aligned}
\end{equation*}
Similarly,
\begin{equation*}
\begin{aligned}
Q^+_{\Z}\cap(\Lambda_N\times\{4N\})
&=
\{a_1,\ldots,N\}\times\{a_2,\ldots,N\}\times\{4N\}.
\end{aligned}
\end{equation*}
\end{proof}

If $r>N$, the intersections with the two support planes are shown in Figure~\ref{fig-5}.

\begin{figure}[H]
\centering
\includegraphics[width=.50\textwidth]{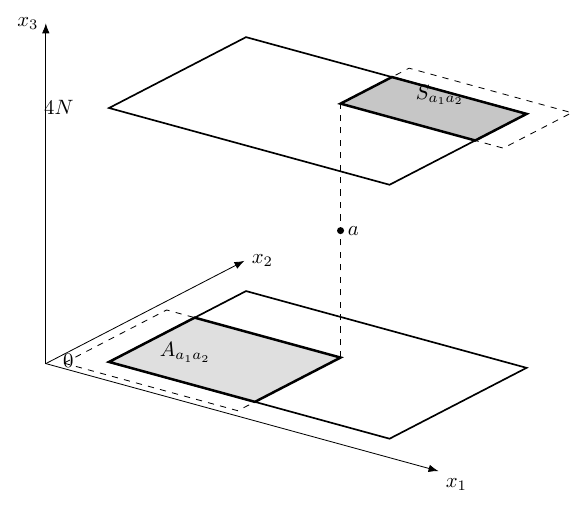}
\caption{The intersections of $Q^-_{\Z}$ and $Q^+_{\Z}$ with the planes supporting $\omega_N$ and $\tau_N$.}
\label{fig-5}

\end{figure}

By Proposition \ref{prop:finite} and Lemma \ref{lem:cube-slice},
\begin{align*}
\omega_N[Q^-_{\Z}]\tau_N[Q^+_{\Z}]^{p-1}
&=
A_{a_1a_2}\bigl((4N)^{3p'}S_{a_1a_2}\bigr)^{p-1}\\
&=(4N)^{3p'(p-1)}=(4N)^{3p}\le 2^{3p}r^{3p}.
\end{align*}
Therefore the estimate is uniform in $N$.

Define
\begin{equation*}
G_N(i,j,k)
=
(4N)^{3p'} g_N(i,j)\nu_{ij}\one_{\{k=4N\}}.
\end{equation*}
\begin{lemma}\label{lem:hardy-embedding}
For every $(i,j)\in\Lambda_N$,
\begin{equation*}
M^+_{\Z^3}G_N(i,j,0)
\ge
\frac{(4N)^{3p'}}{(4N+1)^3}H_Ng_N(i,j),
\end{equation*}
and
\begin{equation*}
\sum_{\{\tau_N>0\}}G_N^p\tau_N^{1-p}
=
(4N)^{3p'}\sum_{i,j}g_N(i,j)^p\nu_{ij}.
\end{equation*}
\end{lemma}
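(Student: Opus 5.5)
For the first inequality I would use a single cube, namely the cube based at $n=(i,j,0)$ with side $r=4N+1$. In other words, I take $m\in\{0,\ldots,4N\}^3$ in the definition of $M^+_{\Z^3}$. Dropping the supremum gives
$$
M^+_{\Z^3}G_N(i,j,0)\ge \frac1{(4N+1)^3}\sum_{m\in\{0,\ldots,4N\}^3}|G_N((i,j,0)+m)|.
$$
Since $G_N\ge0$ is supported on $\Lambda_N\times\{4N\}$, only the terms with $m_3=4N$ contribute. For those terms the point $(i+m_1,j+m_2,4N)$ carries mass only when $i+m_1=k\in\{i,\ldots,N\}$ and $j+m_2=\ell\in\{j,\ldots,N\}$. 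These are admissible because $0\le k-i\le N-1\le 4N$, and the same holds for $\ell-j$. Hence the sum contains every term $(4N)^{3p'}g_N(k,\ell)\nu_{k\ell}$ with $k\ge i$, $\ell\ge j$, so it is at least $(4N)^{3p'}H_Ng_N(i,j)$; in fact it equals this quantity. This is exactly the geometry of Lemma~\ref{lem:cube-slice}: a one-sided cube from the bottom plane reaches the upper-right quadrant on the top plane. Nonnegativity of $g_N$ and $\nu$ is used here, and $g_N\ge0$ was chosen in \eqref{eq:gN}.

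For the second identity I would compute pointwise on $\{\tau_N>0\}$, which is the set of points $(i,j,4N)$ with $\nu_{ij}>0$. At such a point,
$$
G_N^p\tau_N^{1-p}=(4N)^{3p'p}g_N^p\nu_{ij}^p\,(4N)^{3p'(1-p)}\nu_{ij}^{1-p}=(4N)^{3p'}g_N(i,j)^p\nu_{ij}.
$$
Summing over these points gives the sum over $i,j$. The points with $\nu_{ij}=0$ contribute nothing to $\sum g_N^p\nu_{ij}$, so the full sum over $\Lambda_N$ agrees.

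The only point needing care is this last bookkeeping: the sum is taken over $\{\tau_N>0\}$ so that $\tau_N^{1-p}$ is defined. There is no real obstacle, since both parts are direct computations once the single cube of side $4N+1$ is chosen.
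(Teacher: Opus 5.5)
Your proposal is correct and follows the paper's proof: it uses the same single cube $(i,j,0)+\{0,\ldots,4N\}^3$ with $N-1\le 4N$ to capture every $(k,\ell,4N)$ with $k\ge i$, $\ell\ge j$, and the same pointwise computation of $G_N^p\tau_N^{1-p}$ on the support of $\tau_N$. Your remark that the cube sum actually equals $(4N)^{3p'}H_Ng_N(i,j)$ is also correct, though only the inequality is needed.
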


\begin{proof}
Fix $(i,j)\in\Lambda_N$. Consider the following discrete cube
$$
Q_{i,j}
:=
(i,j,0)+\{0,\ldots,4N\}^3.
$$
It contains exactly $(4N+1)^3$ lattice points.

Let $(k,l)\in\Lambda_N$ satisfy
$$
k\ge i,
\qquad
l\ge j.
$$
Since $1\le i,k,l,j\le N$, we have
$$
0\le k-i\le N-i\le N-1\le4N
$$
and
$$
0\le l-j\le N-j\le N-1\le4N.
$$
Hence
$$
(k,l,4N)\in Q_{i,j}
\qquad
\text{whenever }k\ge i,\ l\ge j.
$$
Therefore, by the definition of $M_{\mathbb Z^3}^{+}$ and the
nonnegativity of $g_N$ and $\nu$,
\begin{align*}
M_{\mathbb Z^3}^{+}G_N(i,j,0)
&\ge
\frac1{(4N+1)^3}
\sum_{(x_1,x_2,x_3)\in Q_{i,j}}
G_N(x_1,x_2,x_3)\\
&\ge
\frac1{(4N+1)^3}
\sum_{k=i}^N\sum_{l=j}^N
G_N(k,l,4N).
\end{align*}
By the definition of $G_N$,
\begin{align*}
\sum_{k=i}^N\sum_{l=j}^N
G_N(k,l,4N)
&=
(4N)^{3p'}
\sum_{k=i}^N\sum_{l=j}^N
g_N(k,l)\nu_{kl}\\
&=
(4N)^{3p'}H_Ng_N(i,j).
\end{align*}
Consequently,
$$
M_{\mathbb Z^3}^{+}G_N(i,j,0)
\ge
\frac{(4N)^{3p'}}{(4N+1)^3}
H_Ng_N(i,j).
$$

We next compute the weighted $\ell^p$ norm. Recall that
$$
\tau_N(i,j,k)
=
(4N)^{3p'}\nu_{ij}\,
\mathbf 1_{\{k=4N\}}.
$$
Thus both $G_N$ and $\tau_N$ are supported on the layer
$$
\{(i,j,4N):(i,j)\in\Lambda_N,\ \nu_{ij}>0\}.
$$
At every point of this support,
\begin{align*}
G_N(i,j,4N)^p
\tau_N(i,j,4N)^{1-p}
&=
\bigl((4N)^{3p'}g_N(i,j)\nu_{ij}\bigr)^p
\bigl((4N)^{3p'}\nu_{ij}\bigr)^{1-p}\\
&=
(4N)^{3p'p}
(4N)^{3p'(1-p)}
g_N(i,j)^p
\nu_{ij}^{\,p+1-p}\\
&=
(4N)^{3p'}
g_N(i,j)^p\nu_{ij},
\end{align*}
since
$$
3p'p+3p'(1-p)=3p'.$$
Summing over the support of $\tau_N$ yields
$$
\sum_{\{\tau_N>0\}}
G_N^p\tau_N^{1-p}
=
(4N)^{3p'}
\sum_{(i,j)\in\Lambda_N}
g_N(i,j)^p\nu_{ij},
$$
This completes the proof.
\end{proof}

Lemma~\ref{lem:hardy-embedding}, \eqref{eq:CN-lower}, and \eqref{eq:gN} imply
\begin{equation}\label{eq:finite-lattice-bad}
\frac{\|M^+_{\Z^3}G_N\|_{\ell^{p,\infty}(\omega_N)}}
{\left(\sum_{\{\tau_N>0\}}G_N^p\tau_N^{1-p}\right)^{1/p}}
\ge c_p(\log N)^{1/p'}.
\end{equation}
Hence the $A_p^+$ estimate is uniform in $N$, while the constant in \eqref{eq:finite-lattice-bad} tends to infinity.

To obtain a single pair of weights for which the operator norm is infinite, we combine with increasing values of $N$. Let
$$
N_k=2^{k+3},
\qquad
T_1=0,
\qquad
T_{k+1}=T_k+10N_{k+1},
$$
and
$$
c_k=(T_k,-T_k,0).
$$
Translate the $N_k$-th pair by $c_k$ and denote the resulting measures by $\omega_k$ and $\tau_k$.
Figure~\ref{fig-6} shows the horizontal projections.

\begin{figure}[H]
\centering
\includegraphics[width=.50\textwidth]{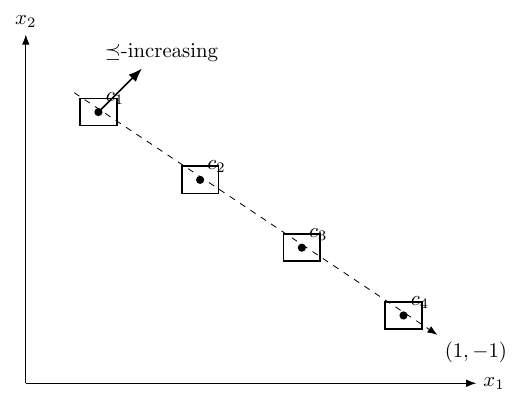}
\caption{Horizontal projections after the translations $c_k=(T_k,-T_k,0)$.}
\label{fig-6}
\end{figure}

The translations satisfy the next separation property.

\begin{lemma}\label{lem:separation-lattice}
With the above choice of $T_k$ and $c_k$,
\begin{equation}\label{eq:block-separation}
x\in\supp\omega_i,
\quad
y\in\supp\tau_j,
\quad x\preceq y
\quad\Longrightarrow\quad i=j.
\end{equation}
\end{lemma}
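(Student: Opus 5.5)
The plan is a direct coordinate comparison: no earlier result is needed beyond the description of the supports and the growth of the $T_k$. First I will record the supports explicitly. By construction (before translation) $\supp\omega_{N}\subset\Lambda_{N}\times\{0\}$ and $\supp\tau_{N}\subset\Lambda_{N}\times\{4N\}$. Hence after translating by $c_k=(T_k,-T_k,0)$ every point of $\supp\omega_k\cup\supp\tau_k$ has
$$
T_k+1\le x_1\le T_k+N_k,\qquad -T_k+1\le x_2\le -T_k+N_k .
$$
The third coordinate plays no role in the argument. The only further fact needed is that the sequence $T_k$ is increasing with
$$
T_j-T_i\ge T_j-T_{j-1}=10N_j\quad\text{for } i<j .
$$

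Now let $x\in\supp\omega_i$ and $y\in\supp\tau_j$ with $i\ne j$. I will show that $x\preceq y$ fails, splitting into two cases according to which block comes first.

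\emph{Case $i<j$.} I will use the second coordinate, which is shifted downward by $T_k$. We have $x_2\ge 1-T_i$ and $y_2\le N_j-T_j$. The inequality $y_2<x_2$ follows from
$$
T_j-T_i\ge 10N_j>N_j-1 .
$$
So $x_2\le y_2$ is violated.

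\emph{Case $i>j$.} I will use the first coordinate, which is shifted upward. We have $x_1\ge T_i+1$ and $y_1\le T_j+N_j$. Since
$$
T_i-T_j\ge 10N_i>N_j ,
$$
we get $y_1<x_1$, so $x_1\le y_1$ is violated.

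Thus $x\preceq y$ forces $i=j$, which is \eqref{eq:block-separation}. There is no real obstacle here. The one point needing care is the choice of coordinate in each case: the translation $c_k$ moves the blocks diagonally in opposite directions in the first two coordinates, so one coordinate always separates any two different blocks in the wrong order. The margin $10N_{k+1}$ is far more than is needed; a gap exceeding $N_{k+1}$ would already suffice.
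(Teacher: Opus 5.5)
Your proof is correct and matches the paper's argument: you use the same support boxes $[T_k+1,T_k+N_k]\times[-T_k+1,-T_k+N_k]$, and the same case split. The second coordinate separates the blocks when $i<j$ and the first when $i>j$, with the gaps $T_j-T_i\ge 10N_j$ and $T_i-T_j\ge 10N_i$ respectively.
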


\begin{proof}
The horizontal projection of the $k$-th block is contained in
$$
[T_k+1,T_k+N_k]\times[-T_k+1,-T_k+N_k].
$$
Suppose first that $i<j$. Since $T_j-T_i\ge10N_j$,
$$
y_2\le -T_j+N_j\le -T_i-9N_j<-T_i+1\le x_2,
$$
so $x_2\le y_2$ is impossible. If $i>j$, then $T_i-T_j\ge10N_i$ and
$$
y_1\le T_j+N_j<T_i+1\le x_1,
$$
so $x_1\le y_1$ is impossible. Then $x\in\supp\omega_i,
y\in\supp\tau_j$ and $x\preceq y$ imply that $i=j$.
\end{proof}

Define
\begin{equation*}
w=\sum_{k=1}^\infty\omega_k,
\qquad
\sigma=1+\sum_{k=1}^\infty\tau_k,
\qquad
v=\sigma^{1-p}.
\end{equation*}
The sums are locally finite. The horizontal projections are disjoint, so $0\le w\le1$. The term $1$ gives $\sigma\ge1$ and hence $v=\sigma^{1-p}>0$.

Fix a pair $Q^-_{\Z},Q^+_{\Z}$. If $Q^+_{\Z}\cap\supp\tau_k=\varnothing$ for every $k$, then
$$
\sigma[Q^+_{\Z}]=r^3,
\qquad
w[Q^-_{\Z}]\le r^3,
$$
so
$$
w[Q^-_{\Z}]\sigma[Q^+_{\Z}]^{p-1}\le r^{3p}.
$$
If $Q^-_{\Z}\cap\supp\omega_i\ne\varnothing$ and $Q^+_{\Z}\cap\supp\tau_j\ne\varnothing$, then \eqref{eq:block-separation} gives $i=j$. Therefore,
$$
w[Q^-_{\Z}]=\omega_k[Q^-_{\Z}],
\qquad
\sigma[Q^+_{\Z}]=r^3+\tau_k[Q^+_{\Z}].
$$
Using
$$
(a+b)^{p-1}\le C_p(a^{p-1}+b^{p-1}),
$$
together with $\omega_k[Q^-_{\Z}]\le r^3$ and
$$
\omega_k[Q^-_{\Z}]\tau_k[Q^+_{\Z}]^{p-1}\le C_pr^{3p},
$$
we obtain
$$
w[Q^-_{\Z}]\sigma[Q^+_{\Z}]^{p-1}\le C_pr^{3p}.
$$
Therefore
$$
A_{p,\Z^3}^+(w,v)<\infty.
$$
For the opposite estimate, set
$$
\mathcal G_k(x)=G_{N_k}(x-c_k).
$$
On $\supp\mathcal G_k$ one has $\sigma\ge\tau_k$. Since $1-p<0$,
$$
v=\sigma^{1-p}\le\tau_k^{1-p}.
$$
Moreover, $w\ge\omega_k$. Using \eqref{eq:finite-lattice-bad},
$$
\frac{\|M^+_{\Z^3}\mathcal G_k\|_{\ell^{p,\infty}(w)}}
{\|\mathcal G_k\|_{\ell^p(v)}}
\ge c_p(\log N_k)^{1/p'}\longrightarrow\infty.
$$
This proves the second assertion of Theorem~\ref{thm:lattice}.

\section{Continuous finite examples}

We now pass from the lattice result to $\R^3$. All geometric scales in this section are fixed once $N$ is chosen. Set
$$
D=[0,1/8]^2,\qquad I^-=[-1/8,0],\qquad I^+=[1,9/8],
$$
and let
$$
\rho_N=\frac{1}{8N}.
$$
Thus $D$ is divided into $N^2$ cubes of side length $\rho_N$. Inside the $(i,j)$-cube we place two smaller squares of side length $\rho_N/10$:
\begin{align*}
L_{ij}
&=\bigl((i-1)\rho_N,(i-1)\rho_N+\rho_N/10\bigr]
  \times\bigl((j-1)\rho_N,(j-1)\rho_N+\rho_N/10\bigr],\\
U_{ij}
&=\bigl(i\rho_N-\rho_N/10,i\rho_N\bigr]
  \times\bigl(j\rho_N-\rho_N/10,j\rho_N\bigr].
\end{align*}
This separation converts the inequalities $k\ge i$ and $\ell\ge j$ into geometric inclusion relations, see Figure \ref{fig-7}.

\begin{figure}[H]
\centering
\includegraphics[width=.38\textwidth]{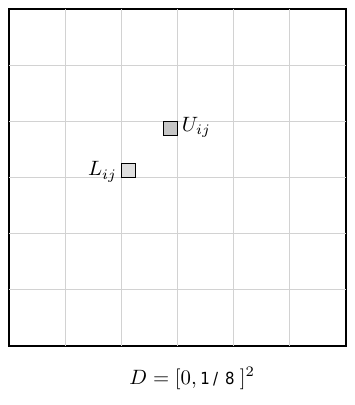}
\caption{The sets $L_{ij}$ and $U_{ij}$ inside a cube of side length $\rho_N$.}
\label{fig-7}
\end{figure}

Define
$$
W_N(x')=\Bigl(\frac{10}{\rho_N}\Bigr)^2
\sum_{i,j}\mu_{ij}\one_{L_{ij}}(x'),
\qquad
\Sigma_N(x')=\Bigl(\frac{10}{\rho_N}\Bigr)^2
\sum_{i,j}\nu_{ij}\one_{U_{ij}}(x').
$$
Since $|L_{ij}|=|U_{ij}|=(\rho_N/10)^2$,
$$
\int_{L_{ij}}W_N=\mu_{ij},
\qquad
\int_{U_{ij}}\Sigma_N=\nu_{ij}.
$$
Set
$$
\widetilde W_N(x',x_3)=8W_N(x')\one_{I^-}(x_3),
\qquad
\widetilde\Sigma_N(x',x_3)=8\Sigma_N(x')\one_{I^+}(x_3).
$$
Then,
$$
\int_{L_{ij}\times I^-}\widetilde W_N=\mu_{ij},
\qquad
\int_{U_{ij}\times I^+}\widetilde\Sigma_N=\nu_{ij}.
$$
Figure \ref{fig-8} shows the supports of $\widetilde W_N$ and $\widetilde\Sigma_N$.

\begin{figure}[H]
\centering
\includegraphics[width=.50\textwidth]{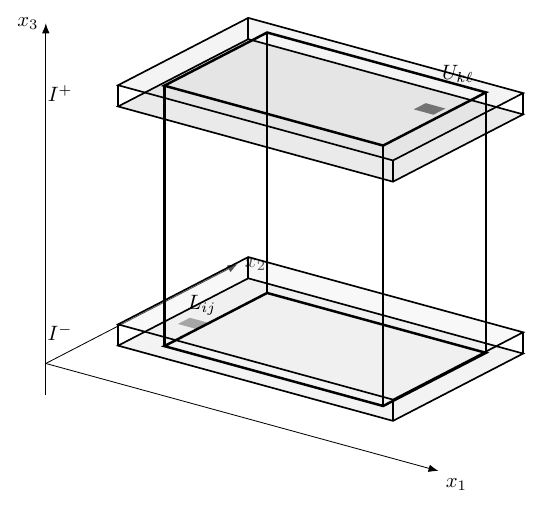}
\caption{The supports of $\widetilde W_N$ and $\widetilde\Sigma_N$.}
\label{fig-8}
\end{figure}

\begin{lemma}\label{lem:continuous-localization}
Let
$$
Q=\prod_{r=1}^3[a_r-h,a_r],
\qquad
Q^+=\prod_{r=1}^3[a_r,a_r+h].
$$
If $\widetilde W_N(Q)>0$ and $\widetilde\Sigma_N(Q^+)>0$, then $h\ge1/2$. Moreover, if
$$
I=\left\lceil\frac{a_1}{\rho_N}\right\rceil,
\qquad
J=\left\lceil\frac{a_2}{\rho_N}\right\rceil,
$$
then
$$
\widetilde W_N(Q)\le A_{IJ},
\qquad
\widetilde\Sigma_N(Q^+)\le S_{IJ}.
$$
\end{lemma}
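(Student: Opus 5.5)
The plan is to separate the vertical and horizontal coordinates: the vertical one forces $h\ge1/2$, and once $h$ is that large the horizontal cubes cut the squares $L_{ij}$ and $U_{ij}$ exactly like the discrete quadrants in Lemma~\ref{lem:cube-slice}. Throughout, "meets" means meets in a set of positive measure, which is all that matters since $\widetilde W_N,\widetilde\Sigma_N$ are densities.

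\emph{Vertical step.} Suppose $\widetilde W_N(Q)>0$. Then $[a_3-h,a_3]$ meets $I^-=[-1/8,0]$ in positive length, so $a_3-h<0$. Suppose also $\widetilde\Sigma_N(Q^+)>0$. Then $[a_3,a_3+h]$ meets $I^+=[1,9/8]$, so $a_3+h>1$. Subtracting the two inequalities gives $2h>1$, hence $h\ge1/2$.

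\emph{Horizontal localisation.} The set $\widetilde\Sigma_N(Q^+)>0$ forces $[a_1,a_1+h]$ to meet some interval $(k\rho_N-\rho_N/10,k\rho_N]\subset(0,1/8]$, so $a_1<1/8$. The set $\widetilde W_N(Q)>0$ forces $[a_1-h,a_1]$ to meet some $((r-1)\rho_N,(r-1)\rho_N+\rho_N/10]\subset(0,1/8]$, so $a_1>0$. The same holds for $a_2$. Consequently $1\le I,J\le N$.

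Since $h\ge1/2>1/8$, the following inclusions hold:
\[
[a_1-h,a_1]\cap[0,1/8]=[0,a_1],\qquad [a_1,a_1+h]\supset[a_1,1/8],
\]
and similarly in the second coordinate.

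\emph{Counting squares.} A square $L_{rs}$ meets $[0,a_1]\times[0,a_2]$ in positive area only if $(r-1)\rho_N<a_1$ and $(s-1)\rho_N<a_2$. These say $r-1<a_1/\rho_N$ and $s-1<a_2/\rho_N$, that is $r\le I$ and $s\le J$. Each such square contributes at most
\[
\int_{L_{rs}\times I^-}\widetilde W_N=\mu_{rs}.
\]
Summing gives $\widetilde W_N(Q)\le\sum_{r\le I,\,s\le J}\mu_{rs}=A_{IJ}$.

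Symmetrically, $U_{kl}$ meets $[a_1,a_1+h]\times[a_2,a_2+h]$ in positive area only if $k\rho_N>a_1$ and $l\rho_N>a_2$. These say $k>a_1/\rho_N$ and $l>a_2/\rho_N$, which give $k\ge I$ and $l\ge J$. This is true whether or not $a_1/\rho_N$ is an integer; in the integer case one even gets $k\ge I+1$. Each such square contributes at most $\nu_{kl}$, so $\widetilde\Sigma_N(Q^+)\le\sum_{k\ge I,\,l\ge J}\nu_{kl}$.

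This last sum equals $S_{IJ}$ when $I+J\ge N+1$. If $I+J\le N$, then $A_{IJ}=0$ and $\widetilde W_N(Q)=0$, contradicting the hypothesis. So we may assume $I+J\ge N+1$, and Proposition~\ref{prop:finite} then applies to $S_{IJ}$ if needed later.

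\emph{Main obstacle.} I expect the only delicate point to be the boundary bookkeeping: the half-open squares, the ceiling convention, and the positive-measure interpretation. These must combine so that the lower quadrant is exactly $\{r\le I\}$ and the upper quadrant is contained in $\{k\ge I\}$ with the same index $I$. The choice of $L_{ij}$ at the lower-left and $U_{ij}$ at the upper-right corners of each cell is what makes the strict inequalities $(r-1)\rho_N<a_1<k\rho_N$ translate cleanly into $r\le I\le k$.
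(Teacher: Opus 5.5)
Your proof is correct and follows the same route as the paper: the vertical coordinate forces $h\ge 1/2$, and the corner placement of $L_{ij}$ and $U_{ij}$ turns the conditions $(r-1)\rho_N<a_1$ and $k\rho_N\ge a_1$ into $r\le I$ and $k\ge I$, after which summing the block integrals gives the bounds by $A_{IJ}$ and $S_{IJ}$. Your extra checks that $1\le I,J\le N$ and that $I+J\ge N+1$ (so that $S_{IJ}$ is actually defined) are careful details that the paper leaves implicit.
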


\begin{proof}
By $\widetilde W_N(Q)>0$ and $\widetilde\Sigma_N(Q^+)>0$, there exist $s_-\in[-1/8,0]$ and $s_+\in[1,9/8]$ such that
$$
a_3-h\le s_-\le a_3\le s_+\le a_3+h.
$$
Thus $1\le s_+-s_-\le2h$, and therefore $h\ge1/2$.

Because $\widetilde W_N(Q)$ and $\widetilde\Sigma_N(Q^+)$ are positive, the common horizontal vertex $(a_1,a_2)$ lies in $D$. If $L_{ij}$ meets the horizontal projection of $Q$, then $(i-1)\rho_N<a_1$ and $(j-1)\rho_N<a_2$. Hence $i\le I$ and $j\le J$. Summing the integrals of $\widetilde W_N$ over the relevant boxes gives
$$
\widetilde W_N(Q)\le\sum_{i\le I}\sum_{j\le J}\mu_{ij}=A_{IJ}.
$$
Similarly, if $U_{k\ell}$ meets the horizontal projection of $Q^+$, then $k\rho_N\ge a_1$ and $\ell\rho_N\ge a_2$. Thus $k\ge I$ and $\ell\ge J$, and
$$
\widetilde\Sigma_N(Q^+)\le\sum_{k\ge I}\sum_{\ell\ge J}\nu_{k\ell}=S_{IJ}.
$$
\end{proof}

\begin{proposition}\label{prop:continuous-finite}
For every pair $Q,Q^+$,
\begin{equation}\label{eq:continuous-block-Ap}
\widetilde W_N(Q)\widetilde\Sigma_N(Q^+)^{p-1}
\le2^{3p}|Q|^p.
\end{equation}
Moreover, there exists a nonnegative compactly supported function $F_N$, supported in $\supp\widetilde\Sigma_N$, such that
\begin{equation}\label{eq:continuous-block-bad}
\frac{\|M_3^+F_N\|_{L^{p,\infty}(\widetilde W_N)}}
{\left(\int_{\R^3}F_N^p\widetilde\Sigma_N^{1-p}\right)^{1/p}}
\ge c_p(\log N)^{1/p'}.
\end{equation}
\end{proposition}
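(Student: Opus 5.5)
The plan is to prove the two assertions separately. The Muckenhoupt bound \eqref{eq:continuous-block-Ap} comes from Lemma~\ref{lem:continuous-localization} combined with the exact identity of Proposition~\ref{prop:finite}. The lower bound \eqref{eq:continuous-block-bad} comes from a continuous analogue of Lemma~\ref{lem:hardy-embedding}, applied to the near-extremal function $g_N$ from \eqref{eq:gN}.

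\emph{The $A_p^+$ bound.} Fix $Q,Q^+$ with common vertex $a$ and side $h$. If $\widetilde W_N(Q)=0$ or $\widetilde\Sigma_N(Q^+)=0$ there is nothing to prove, so assume both are positive.
\begin{itemize}
\item By Lemma~\ref{lem:continuous-localization}, $h\ge 1/2$, so $|Q|^p=h^{3p}\ge 2^{-3p}$.
\item The same lemma gives $\widetilde W_N(Q)\le A_{IJ}$ and $\widetilde\Sigma_N(Q^+)\le S_{IJ}$.
\item I need $1\le I,J\le N$. The vertex $(a_1,a_2)$ lies in $D$, so $a_1,a_2\le 1/8=N\rho_N$, which gives $I,J\le N$.
\item To exclude $a_1\le 0$, I use that the $L_{ij}$ are half-open with excluded left endpoints. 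If $a_1\le0$, the horizontal projection of $Q$ misses every $L_{ij}$, contradicting $\widetilde W_N(Q)>0$. The same holds for $a_2$.
\item Since $\widetilde W_N(Q)>0$, we have $A_{IJ}>0$, hence $I+J\ge N+1$.
\end{itemize}
Proposition~\ref{prop:finite} then gives $A_{IJ}S_{IJ}^{p-1}=1$. Therefore
$$\widetilde W_N(Q)\widetilde\Sigma_N(Q^+)^{p-1}\le 1\le 2^{3p}|Q|^p.$$

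\emph{The bad function.} Define
$$F_N(x',x_3)=\sum_{k,\ell} g_N(k,\ell)\,\widetilde\Sigma_N(x',x_3)\one_{U_{k\ell}\times I^+}(x',x_3).$$
This is nonnegative, compactly supported, and supported in $\supp\widetilde\Sigma_N$. On $U_{k\ell}\times I^+$ we have $F_N^p\widetilde\Sigma_N^{1-p}=g_N(k,\ell)^p\widetilde\Sigma_N$. Integrating and using $\int_{U_{k\ell}\times I^+}\widetilde\Sigma_N=\nu_{k\ell}$ gives
$$\int_{\R^3}F_N^p\widetilde\Sigma_N^{1-p}=\|g_N\|_{\ell^p(\nu)}^p.$$

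Now fix $x\in L_{ij}\times I^-$ and take the cube $x+[0,2]^3$, which has volume $8$. I check that it contains $U_{k\ell}\times I^+$ whenever $k\ge i$ and $\ell\ge j$.
\begin{itemize}
\item Vertically: $x_3\in[-1/8,0]$, so $[x_3,x_3+2]\supset[1,9/8]$.
\item Horizontally, lower end: $x_1\le (i-1)\rho_N+\rho_N/10 < k\rho_N-\rho_N/10$, the left endpoint of $U_{k\ell}$ in the first coordinate.
\item Horizontally, upper end: $U_{k\ell}$ lies in $[0,1/8]^2$, while $x_1+2>1/8$.
\end{itemize}
The second coordinate is handled the same way. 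Hence
$$M_3^+F_N(x)\ge \frac18\sum_{k\ge i,\ \ell\ge j} g_N(k,\ell)\nu_{k\ell}=\frac18 H_Ng_N(i,j).$$

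\emph{Passing to the weak norm.} For $\lambda>0$, the set $\{M_3^+F_N>\lambda/8\}$ contains $\bigcup\{L_{ij}\times I^-: H_Ng_N(i,j)>\lambda\}$. Since $\widetilde W_N(L_{ij}\times I^-)=\mu_{ij}$, its $\widetilde W_N$-measure is at least $\mu[\{H_Ng_N>\lambda\}]$. Consequently
$$\|M_3^+F_N\|_{L^{p,\infty}(\widetilde W_N)}\ge \tfrac18\|H_Ng_N\|_{\ell^{p,\infty}(\mu)}\ge\tfrac1{16}C_N\|g_N\|_{\ell^p(\nu)}.$$
Here the last step is \eqref{eq:gN}. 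Combining this with \eqref{eq:CN-lower} yields \eqref{eq:continuous-block-bad}.

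The main obstacle is the boundary bookkeeping in the first part. One must rule out degenerate vertices where $I$ or $J$ is $0$ or exceeds $N$, which relies on the half-open boxes and $D=[0,1/8]^2$. One must also confirm the case $A_{IJ}=0$ is trivial. Once that is settled, the rest is a direct transcription of the lattice argument.
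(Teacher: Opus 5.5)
Your proposal is correct and follows the paper's proof essentially step for step. The $A_p^+$ bound comes from Lemma~\ref{lem:continuous-localization} together with the identity $A_{IJ}S_{IJ}^{p-1}=1$, and the lower bound uses the same $F_N$ (your $g_N\widetilde\Sigma_N$ on each box coincides with the paper's formula) tested on one cube containing every $U_{k\ell}\times I^+$ with $k\ge i$, $\ell\ge j$. The differences are only cosmetic: you use $x+[0,2]^3$ (constant $1/8$) where the paper uses $x+[0,5/4]^3$ (constant $(4/5)^3$), and you make explicit the checks $1\le I,J\le N$ and $I+J\ge N+1$, which the paper leaves implicit.
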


\begin{proof}
If one of the $\widetilde W_N(Q), \widetilde\Sigma_N(Q^+)$ in \eqref{eq:continuous-block-Ap} is zero, there is nothing to prove. Otherwise Lemma~\ref{lem:continuous-localization} gives $h\ge1/2$ and
$$
\widetilde W_N(Q)\widetilde\Sigma_N(Q^+)^{p-1}
\le A_{IJ}S_{IJ}^{p-1}=1.
$$
Since $|Q|=h^3$ and $h\ge1/2$,
$$
\widetilde W_N(Q)\widetilde\Sigma_N(Q^+)^{p-1}\leq 1\le2^{3p}|Q|^p.
$$

Choose $g_N$ as above and define
$$
F_N(y',y_3)
=
8\Bigl(\frac{10}{\rho_N}\Bigr)^2
\sum_{k,\ell}g_N(k,\ell)\nu_{k\ell}
\one_{U_{k\ell}}(y')\one_{I^+}(y_3).
$$
Fix $x'\in L_{ij}$. Since the side length of $L_{ij}$ and $U_{ij}$ is $\rho_N/10$,
$$
x_1\le(i-1)\rho_N+\frac{\rho_N}{10}
<i\rho_N-\frac{\rho_N}{10},
$$
and the same inequality holds in the second coordinate. Consequently,
$$
k\ge i,\ \ell\ge j
\quad\Longrightarrow\quad
U_{k\ell}\subset[x_1,1/8]\times[x_2,1/8].
$$
For $x_3\in I^-$, the set
$$
[x_1,1/8]\times[x_2,1/8]\times I^+
$$
is contained in the cube $(x',x_3)+[0,5/4]^3$. Hence
$$
M_3^+F_N(x',x_3)
\ge
\Bigl(\frac45\Bigr)^3
H_Ng_N(i,j).
$$
Therefore
$$
\|M_3^+F_N\|_{L^{p,\infty}(\widetilde W_N)}
\ge
\Bigl(\frac45\Bigr)^3
\|H_Ng_N\|_{\ell^{p,\infty}(\mu)}.
$$

On $U_{k\ell}\times I^+$,
$$
F_N=8\Bigl(\frac{10}{\rho_N}\Bigr)^2g_N(k,\ell)\nu_{k\ell},
\qquad
\widetilde\Sigma_N=8\Bigl(\frac{10}{\rho_N}\Bigr)^2\nu_{k\ell}.
$$
Multiplying by the volume $(\rho_N/10)^2/8$ gives
$$
\int_{U_{k\ell}\times I^+}F_N^p\widetilde\Sigma_N^{1-p}
=g_N(k,\ell)^p\nu_{k\ell}.
$$
It follows that
$$
\int_{\R^3}F_N^p\widetilde\Sigma_N^{1-p}
=
\sum_{k,\ell}g_N(k,\ell)^p\nu_{k\ell}.
$$
The lower bound now follows from the choice of $g_N$ and the estimate
$$
C_N\ge c_p(\log N)^{1/p'}.
$$
\end{proof}

The final step requires a normalization of each finite continuous pair. Let $\alpha>0$ and set
\begin{equation}\label{eq:scaling}
\beta=\alpha^{-1/(p-1)},\qquad
W_N^{(\alpha)}=\alpha\widetilde W_N,\qquad
\Sigma_N^{(\alpha)}=\beta\widetilde\Sigma_N,\qquad
F_N^{(\alpha)}=\beta F_N.
\end{equation}
Since $\alpha\beta^{p-1}=1$, Proposition~\ref{prop:continuous-finite} shows that
$$
\frac{
\|M_3^+F_N^{(\alpha)}\|_{L^{p,\infty}(W_N^{(\alpha)})}
}{
\left(\int (F_N^{(\alpha)})^p(\Sigma_N^{(\alpha)})^{1-p}\right)^{1/p}
}
=
\frac{
\|M_3^+F_N\|_{L^{p,\infty}(\widetilde W_N)}
}{
\left(\int F_N^p\widetilde\Sigma_N^{1-p}\right)^{1/p}
}.
$$
We shall choose $\alpha$ so that $W_N^{(\alpha)}\le1$.

\section{Proof of the main theorem}
\subsection{Dimension $d=3$}
The continuous finite estimates are uniform in $N$. Choose $N_k=2^{k+3}$. For each $k$, take the finite continuous pair associated with $N_k$. Define
\begin{equation*}
\alpha_k
=
\min\left\{1,\|\widetilde W_{N_k}\|_\infty^{-1}\right\},
\qquad
\beta_k=\alpha_k^{-1/(p-1)}.
\end{equation*}
Then
$$
0\le\alpha_k\widetilde W_{N_k}\le1.
$$
Fix $R>1/8$ and set
$$
c_k=(Rk,-Rk,0).
$$
Define
\begin{equation*}
\omega_3^k(x)=\alpha_k\widetilde W_{N_k}(x-c_k),
\qquad
\tau_3^k(x)=\beta_k\widetilde\Sigma_{N_k}(x-c_k).
\end{equation*}
Since every horizontal support is contained in a translate of $D=[0,1/8]^2$ and $R>1/8$,
\begin{equation}\label{eq:continuous-separation}
x\in\supp\omega_i,
\quad y\in\supp\tau_j,
\quad x\preceq y
\quad\Longrightarrow\quad i=j.
\end{equation}
If $i<j$, then
$$
y_2\le -Rj+1/8<-Ri\le x_2,
$$
so $x_2\le y_2$ is impossible. If $i>j$, the first coordinate gives $y_1<x_1$. Hence \eqref{eq:continuous-separation} holds.

Define
\begin{equation*}
w_3=\sum_{k=1}^\infty\omega_3^k,
\qquad
\sigma_3=1+\sum_{k=1}^\infty\tau_3^k,
\qquad
v_3=\sigma_3^{1-p}.
\end{equation*}
The sums are locally finite because $|c_k|\to\infty$ and all horizontal supports have diameter at most $\sqrt2/8$. Moreover,
$$
0\le w_3\le1,
\qquad
\sigma_3\ge1,
\qquad
0<v_3=\sigma_3^{1-p}\le1.
$$
Thus $w_3$ and $v_3$ are locally integrable.

If $Q_3$ meets $\supp\omega_3^i$ and $Q_3^+$ meets $\supp\tau_3^j$, then every $x\in Q$ and $y\in Q^+$ satisfy $x\preceq y$, and \eqref{eq:continuous-separation} gives $i=j$. If $Q_3^+\cap\supp\tau_k=\varnothing$ for every $k$, then
$$
\sigma(Q_3^+)=|Q_3|,
\qquad
w_3(Q_3)\le|Q_3|,
$$
so
$$
w_3(Q_3)\sigma(Q_3^+)^{p-1}\le|Q_3|^p.
$$
If both sides meet the same block $k$, then
$$
w_3(Q_3)=\omega_3^k(Q_3),
\qquad
\sigma_3(Q_3^+)=|Q_3|+\tau_3^k(Q_3^+).
$$
Using $(a+b)^{p-1}\le C_p(a^{p-1}+b^{p-1})$, $w\le1$, Proposition~\ref{prop:continuous-finite}, and the scaling invariance \eqref{eq:scaling}, we obtain
\begin{align*}
w_3(Q_3)\sigma_3(Q_3^+)^{p-1}
&\le
C_p\omega_3^k(Q_3)|Q_3|^{p-1}
+C_p\omega_3^k(Q_3)\tau_3^k(Q_3^+)^{p-1}\\
&\le C_p|Q_3|^p.
\end{align*}
Therefore
\begin{equation*}
A_{p,3}^+(w,v)<\infty.
\end{equation*}

Let $F_{N_k}$ be the function from Proposition~\ref{prop:continuous-finite} and set
\begin{equation*}
f_k(x)=\beta_kF_{N_k}(x-c_k).
\end{equation*}
On $\supp f_k$ we have $\sigma_3\ge\tau_3^k$. Since $1-p<0$,
$$
v_3=\sigma_3^{1-p}\le(\tau_3^k)^{1-p}.
$$
Hence
\begin{equation*}
\int_{\R^3}f_k^pv_3
\le
\int_{\R^3}f_k^p(\tau_3^k)^{1-p}.
\end{equation*}
Also $w_3\ge\omega_3^k$, so
\begin{equation*}
\|M_3^+f_k\|_{L^{p,\infty}(w_3)}
\ge
\|M_3^+f_k\|_{L^{p,\infty}(\omega_3^k)}.
\end{equation*}
It follows that
$$
\frac{\|M_3^+f_k\|_{L^{p,\infty}(w_3)}}{\|f_k\|_{L^p(v_3)}}
\ge
c_p(\log N_k)^{1/p'}\longrightarrow\infty.
$$
Thus
$$
\|M_3^+\|_{L^p(v_3)\to L^{p,\infty}(w_3)}=\infty.
$$
This proves the three-dimensional case of Theorem~\ref{thm:main}.

\subsection{Extension to dimension $d>3$}

We now show that the three-dimensional counterexample extends to every higher dimension.

\begin{proposition}\label{prop:higher-dimensional-extension}
Suppose that weights $w_3,v_3$ on $\mathbb R^3$ satisfy
$$
A_{p,3}^+(w_3,v_3)<\infty
$$
and
$$
\|M_3^+\|_{L^p(v_3)\to L^{p,\infty}(w_3)}=\infty.
$$
Then, for every $d>3$, there are weights $w_d,v_d$ on $\mathbb R^d$
such that
$$
A_{p,d}^+(w_d,v_d)=A_{p,3}^+(w_3,v_3)
$$
and
$$
\|M_d^+\|_{L^p(v_d)\to L^{p,\infty}(w_d)}=\infty.
$$
\end{proposition}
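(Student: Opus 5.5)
We extend the three-dimensional weights trivially in the extra variables. Write $x=(x',x'')\in\R^3\times\R^{d-3}$ and set
$$
w_d(x',x'')=w_3(x'),\qquad v_d(x',x'')=v_3(x').
$$
Both are locally integrable, and $v_d>0$ whenever $v_3>0$. Since $\sigma_d=v_d^{-1/(p-1)}=\sigma_3(x')$ is also constant in $x''$, the proof has two parts: an identity for the Muckenhoupt constants, and a lower bound for $M_d^+$ obtained by testing on functions that are independent of $x''$ on a large box.

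\textbf{The $A_p^+$ constant.} Fix $a=(a',a'')$ and $h>0$. Each of the cubes $Q^\pm$ in $\R^d$ splits as $Q_3^\pm\times Q''^\pm$, where $Q_3^\pm$ is the corresponding cube in $\R^3$ and $|Q''^\pm|=h^{d-3}$. By Fubini,
$$
w_d(Q^-)=w_3(Q_3^-)\,h^{d-3},\qquad \sigma_d(Q^+)=\sigma_3(Q_3^+)\,h^{d-3}.
$$
Hence
$$
\frac{w_d(Q^-)\sigma_d(Q^+)^{p-1}}{h^{dp}}
=\frac{w_3(Q_3^-)\sigma_3(Q_3^+)^{p-1}\,h^{(d-3)p}}{h^{3p}h^{(d-3)p}}
=\frac{w_3(Q_3^-)\sigma_3(Q_3^+)^{p-1}}{h^{3p}}.
$$
As $(a,h)$ ranges over $\R^d\times(0,\infty)$, the pair $(a',h)$ ranges over all of $\R^3\times(0,\infty)$, so taking suprema gives $A_{p,d}^+(w_d,v_d)=A_{p,3}^+(w_3,v_3)$.

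\textbf{The operator norm.} Given $f\ge0$ on $\R^3$ and $L>0$, set
$$
f_L(x',x'')=f(x')\one_{[0,L]^{d-3}}(x''),\qquad
\|f_L\|_{L^p(v_d)}=\|f\|_{L^p(v_3)}L^{(d-3)/p}.
$$
Let $M^{+,T}_3$ denote the maximal operator $M_3^+$ with the supremum restricted to $0<h\le T$. If $x''\in[0,L/2]^{d-3}$ and $h\le L/2$, then $x''+[0,h]^{d-3}\subset[0,L]^{d-3}$. The $d$-dimensional average of $f_L$ over $x+[0,h]^d$ therefore equals the three-dimensional average of $f$ over $x'+[0,h]^3$. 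Consequently
$$
M_d^+f_L(x',x'')\ge M_3^{+,L/2}f(x'),\qquad x''\in[0,L/2]^{d-3}.
$$
This gives
$$
w_d\bigl(\{M_d^+f_L>\lambda\}\bigr)\ge w_3\bigl(\{M_3^{+,L/2}f>\lambda\}\bigr)\,(L/2)^{d-3},
$$
and hence
$$
\frac{\|M_d^+f_L\|_{L^{p,\infty}(w_d)}}{\|f_L\|_{L^p(v_d)}}
\ge 2^{-(d-3)/p}\,
\frac{\|M_3^{+,L/2}f\|_{L^{p,\infty}(w_3)}}{\|f\|_{L^p(v_3)}}.
$$

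The only real point is removing the truncation. Since $M_3^{+,T}f\uparrow M_3^+f$ pointwise as $T\to\infty$, the level sets satisfy $\{M_3^{+,T}f>\lambda\}\uparrow\{M_3^+f>\lambda\}$. Monotone convergence then gives
$$
w_3\bigl(\{M_3^{+,T}f>\lambda\}\bigr)\to w_3\bigl(\{M_3^+f>\lambda\}\bigr)
$$
for each fixed $\lambda$. It follows that $\liminf_{T\to\infty}\|M_3^{+,T}f\|_{L^{p,\infty}(w_3)}\ge\|M_3^+f\|_{L^{p,\infty}(w_3)}$.

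Letting $L\to\infty$, we obtain
$$
\|M_d^+\|_{L^p(v_d)\to L^{p,\infty}(w_d)}\ge 2^{-(d-3)/p}\,\frac{\|M_3^+f\|_{L^{p,\infty}(w_3)}}{\|f\|_{L^p(v_3)}}
$$
for every $f$. By hypothesis the right-hand side is unbounded over $f$; for instance, take $f=f_k$ from the three-dimensional construction. Hence $\|M_d^+\|_{L^p(v_d)\to L^{p,\infty}(w_d)}=\infty$, which proves the proposition.
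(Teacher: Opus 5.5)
Your proof is correct and follows essentially the same route as the paper. Both arguments use the trivial extension $w_d(x',x'')=w_3(x')$, $v_d(x',x'')=v_3(x')$, Fubini for the $A_p^+$ identity, testing on $f(x')\one_{[0,L]^{d-3}}(x'')$ to dominate the truncated operator $M_3^{+,L/2}f$ on $[0,L/2]^{d-3}$, and monotone convergence to remove the truncation. The difference is minor: you derive the lower bound on $\|M_d^+\|$ directly for every $f\in L^p(v_3)$, rather than arguing by contradiction over compactly supported $f$, and this lets you skip the paper's closing truncation step.
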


\begin{proof}
Write
$$
x=(x',x'')\in\mathbb R^3\times\mathbb R^{d-3}.
$$
Set
$$
w_d(x',x'')=w_3(x'),
\qquad
v_d(x',x'')=v_3(x').
$$
If $\sigma_j=v_j^{-1/(p-1)}$, then
$\sigma_d(x',x'')=\sigma_3(x')$.

Consider a one-sided pair
$$
Q_d^-=\prod_{j=1}^d[a_j-h,a_j],
\qquad
Q_d^+=\prod_{j=1}^d[a_j,a_j+h],
$$
and let $Q_3^\pm$ denote its projection onto the first three
coordinates. Fubini's theorem gives
$$
w_d(Q_d^-)=h^{d-3} w_3(Q_3^-),
\qquad
\sigma_d(Q_d^+)=h^{d-3}\sigma_3(Q_3^+).
$$
Since $|Q_d^-|=h^d$,
$$
\frac{w_d(Q_d^-)\sigma_d(Q_d^+)^{p-1}}{|Q_d^-|^p}
=
\frac{w_3(Q_3^-)\sigma_3(Q_3^+)^{p-1}}{h^{3p}}.
$$
Taking the supremum over $a$ and $h$ yields
$$
A_{p,d}^+(w_d,v_d)=A_{p,3}^+(w_3,v_3).
$$

Assume that
$$
\|M_d^+F\|_{L^{p,\infty}(w_d)}
\le C\|F\|_{L^p(v_d)}
$$
for every compactly supported $F$. Let
$f\in L^p(v_3)$ be compactly supported and, for $R>0$, define
$$
F_R(x',x'')
=
R^{-(d-3)/p}f(x')\mathbf 1_{[0,R]^{d-3}}(x'').
$$
Then
$$
\|F_R\|_{L^p(v_d)}=\|f\|_{L^p(v_3)}.
$$
Let
$$
M_{3,R/2}^+f(x')
=
\sup_{0<h\le R/2}
\frac1{h^3}\int_{x'+[0,h]^3}|f(y')|\,dy'.
$$
For $x''\in[0,R/2]^{d-3}$ and $0<h\le R/2$,
$$
x''+[0,h]^{d-3}\subset[0,R]^{d-3},
$$
and
$$
M_d^+F_R(x',x'')
\ge
R^{-(d-3)/p}M_{3,R/2}^+f(x').
$$
Consequently, for every $\lambda>0$,
\begin{align*}
w_d\bigl(\{M_d^+F_R>\lambda\}\bigr)
&\ge
\left(\frac R2\right)^{d-3}
w_3\bigl(
\{M_{3,R/2}^+f>R^{(d-3)/p}\lambda\}
\bigr).
\end{align*}
After the change of variable
$t=R^{(d-3)/p}\lambda$, we obtain
$$
\|M_d^+F_R\|_{L^{p,\infty}(w_d)}
\ge
2^{-(d-3)/p}
\|M_{3,R/2}^+f\|_{L^{p,\infty}(w_3)}.
$$
Therefore,
$$
\|M_{3,R/2}^+f\|_{L^{p,\infty}(w_3)}
\le
2^{(d-3)/p}C\|f\|_{L^p(v_3)}.
$$
As $R\to\infty$,
$$
M_{3,R/2}^+f(x')\uparrow M_3^+f(x').
$$
Hence, for every $\lambda>0$,
the monotone convergence gives
$$
\|M_3^+f\|_{L^{p,\infty}(w_3)}
\le
2^{(d-3)/p}C\|f\|_{L^p(v_3)}.
$$
By a standard truncation argument,
the same estimate extends to all of
$L^p(v_3)$. This contradicts the choice of $w_3,v_3$.
\end{proof}

Proposition~\ref{prop:higher-dimensional-extension}, together with the
three-dimensional construction, completes the proof of
Theorem~\ref{thm:main}.

\section{Acknowledgement}

Dinghuai Wang is supported by National Natural Science Foundation of China (No.~12101010), Natural Science Foundation of Anhui Province (No.
2108085QA19) and Key Scientific Project of Higher Education Institutions in Anhui Province (No.2023AH050145).

\vspace{0.5cm}

\vskip 0.2 true cm
{\bf Conflict of Interest Statement:}

\vskip 0.2 true cm

{The authors declare that there is no conflict of interest in relation to this article.}

\vskip 0.2 true cm
{\bf Data availability statement:}

\vskip 0.2 true cm

{Data sharing is not applicable to this article as no data sets are generated
during the current study.}

\vskip 0.2 true cm

\end{document}